\newenvironment{psmallmatrix}
  {\left(\begin{smallmatrix}}
  {\end{smallmatrix}\right)}
\newcommand{\red}{\textcolor{red}}
\renewcommand{\arraystretch}{1.3}
\begin{document}

\title*{An Invitation to Noncommutative Algebra}
\author{Chelsea Walton}
\institute{Chelsea Walton \at The University of Illinois at Urbana-Champaign, Department of Mathematics,
273 Altgeld Hall,
1409 W. Green Street,
Urbana, IL 61801,  \email{notlaw@illinois.edu}}

%
%
\maketitle

\abstract*{\red{[FINISH- aimed general mathematical audience- one semester of undergraduate algebras (covering groups and a tad bit of rings) would be enough].....................................}}

\abstract{This is a brief introduction to the world of Noncommutative Algebra aimed at advanced undergraduate and beginning graduate students.
}


\section{Introduction}
\label{sec:intro}


The purpose of this note is to invite you, the reader, into the world of Noncommutative Algebra. What is it? In short, it is the study of algebraic structures that have a noncommutative multiplication. One's first encounter with these structures occurs typically with matrices. Indeed, given two $n$-by-$n$ matrices $X$ and $Y$ with $n > 1$, we get that $X Y \neq Y X$ in general. But this simple observation motivates a deeper reason why Noncommutative Algebra is ubiquitous...

\smallskip
Let's consider two basic transformations of images in real 2-space: Rotation by 90 degrees clockwise and Reflection about the vertical axis. As we see in the figures below, the {\it order} in which these transformations are performed {\it matters}.

\begin{figure}[h]
\vspace{-.15in}
\centerline{\includegraphics[width=0.82\textwidth]{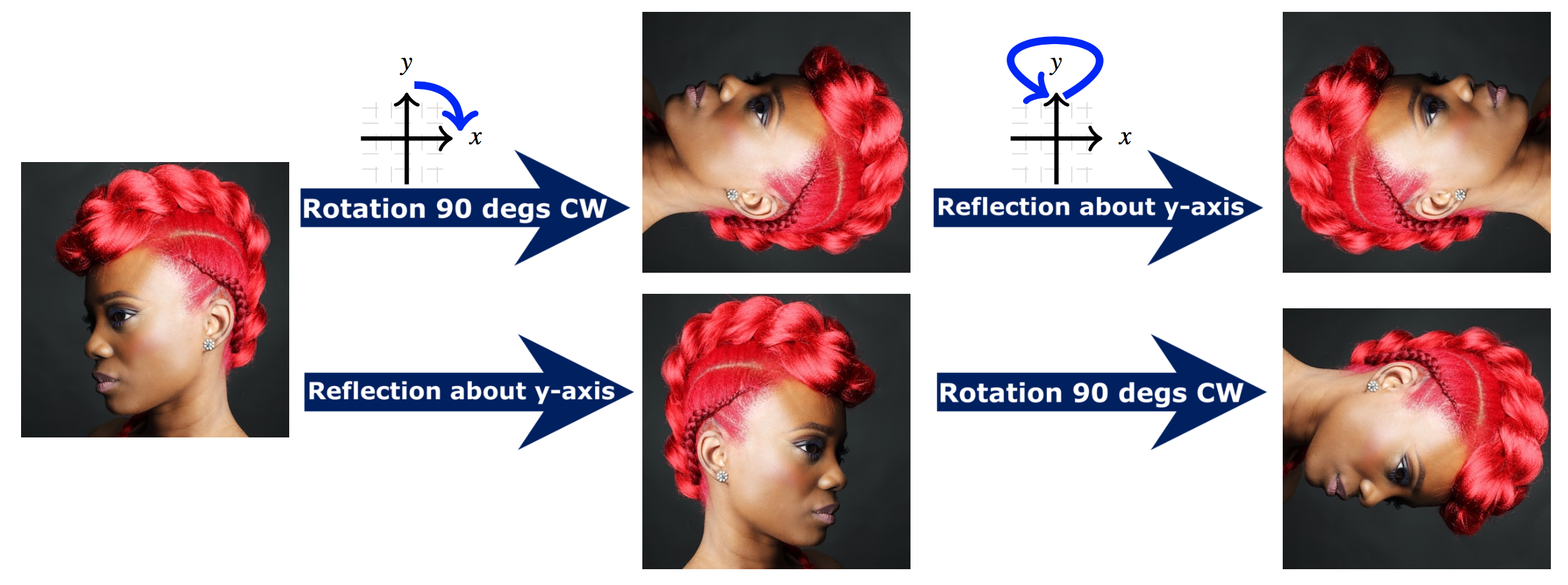}}
\vspace{-.1in}
\small{\caption{The composition of rotation and reflection transformations is noncommutative. }}
\end{figure}

\pagebreak

\noindent Since these transformations are {\it linear} (i.e., in $\mathbb{R}^2$, lines are sent to lines), they can be encoded by 2-by-2 matrices with entries in $\mathbb{R}$ \cite[Section~3.C]{Axler}. Namely, 
\begin{itemize} 
\item $90^\circ$ CW Rotation corresponds to $\begin{psmallmatrix} 0 & 1 \\ -1 & 0 \end{psmallmatrix}$, which sends vector  $\begin{psmallmatrix} v_1  \\ v_2  \end{psmallmatrix}$ to $\begin{psmallmatrix} v_2  \\ -v_1  \end{psmallmatrix}$;
\medskip
\item Reflection about the $y$-axis  is encoded by $\begin{psmallmatrix} -1 & 0 \\ 0 & 1 \end{psmallmatrix}$, which sends $\begin{psmallmatrix} v_1  \\ v_2  \end{psmallmatrix}$ to $\begin{psmallmatrix} - v_1  \\ v_2  \end{psmallmatrix}$.
\end{itemize}
The composition of linear transformations is then encoded by matrix multiplication. So, the  first row in Figure 1 is corresponds to $\begin{psmallmatrix} -1 & 0 \\ 0 & 1 \end{psmallmatrix}$$\begin{psmallmatrix} 0 & 1 \\ -1 & 0 \end{psmallmatrix}$ = $\begin{psmallmatrix} 0 & -1 \\ -1 & 0 \end{psmallmatrix}$, which sends $\begin{psmallmatrix} v_1  \\ v_2  \end{psmallmatrix}$ to $\begin{psmallmatrix} - v_2  \\ -v_1  \end{psmallmatrix}$. Yet the second row is given by  $\begin{psmallmatrix} 0 & 1 \\ -1 & 0 \end{psmallmatrix}$$\begin{psmallmatrix} -1 & 0 \\ 0 & 1 \end{psmallmatrix}$ = $\begin{psmallmatrix} 0 & 1 \\ 1 & 0 \end{psmallmatrix}$, sending $\begin{psmallmatrix} v_1  \\ v_2  \end{psmallmatrix}$ to $\begin{psmallmatrix} v_2  \\ v_1  \end{psmallmatrix}$. Therefore, the outcome of Figure 1 is a result of  the fact that $\begin{psmallmatrix} 0 & -1 \\ -1 & 0 \end{psmallmatrix}$ $\neq$  $\begin{psmallmatrix} 0 & 1 \\ 1 & 0 \end{psmallmatrix}$.

\smallskip
One can cook up other, say higher dimensional, examples of the varying outcomes of composing linear transformations by exploiting the noncommutativity of matrix multiplication. This is all part of the general phenomenon that {\it functions} do not commute under composition  typically. (Think of the myriad of outcomes of composing functions from everyday life-- for instance, washing and drying clothes!) 

\bigskip

Now let's turn our attention to special functions that we first encounter as children: {\bf Symmetries}. To make this concept more concrete mathematically, consider the informal definition and notation below.

\begin{definition} \label{def:sym} Take any object $X$. Then, a {\it symmetry} of $X$ is an invertible,
 structure/ property-preserving transformation from $X$ to itself. The collection of such transformations is denoted by Sym($X$). 
\end{definition}

Historically, the examination of symmetries in mathematics and physics served as one of the inspirations for the defining a {\it group} as an abstract algebraic structure (see, e.g., \cite[Section~1(c)]{Kleiner}). Namely, Sym($X$) is a group with the identity element $e$ being the ``do nothing" transformation, with composition as the associative binary operation, and Sym($X$) is equipped with inverse elements by definition. 

\smallskip
Continuing the example above: Take $X = \mathbb{R}^2$ and  Sym($\mathbb{R}^2$) to be the collection of $\mathbb{R}$-linear transformations from $\mathbb{R}^2$ to $\mathbb{R}^2$ (so the origin is fixed). We get that Sym($\mathbb{R}^2$) is the {\it general linear group} GL($\mathbb{R}^2$), often written as GL$_2$($\mathbb{R}$) denoting the group of all invertible 2-by-2 matrices with real entries. Further, this group is {\it nonabelian}; thus, composition of $\mathbb{R}$-linear symmetries of $\mathbb{R}^2$ is~noncommutative.

\bigskip

Another concept that is inherently noncommutative is that of a {\bf representation}. We will see later in Section~3 that this is best motivated by elementary problem of finding {\it matrix solutions to equations} (which, in turn, can have physical implications).  But for now let's think about the problem below.

\begin{question} \label{ques:x2=1}
Which matrices $M \in$ Mat$_2(\mathbb{R})$ satisfy the equation $x^2 = 1$?
\end{question}

Now  one could do the chore of writing down an arbitrary matrix $M =\begin{psmallmatrix} a & b \\ c & d \end{psmallmatrix}$ and solve for entries $a,b,c,d$ that satisfy $$\begin{pmatrix} a & b \\ c & d \end{pmatrix}\begin{pmatrix} a & b \\ c & d \end{pmatrix}= \begin{pmatrix} 1 & 0 \\ 0 & 1 \end{pmatrix}.$$ Not only is this boring, and it can be very tedious to find solutions to more general problems (e.g., taking instead $M \in$ Mat$_n(\mathbb{R})$ for $n>2$). For a more elegant approach to Question~\ref{ques:x2=1}, consider an abstract algebraic structure $T$ defined by the equation $x^2 =1$, and link $T$  to Sym($\mathbb{R}^2$) via a structure preserving map $\phi$. Then, a solution to Question~\ref{ques:x2=1} is produced in terms of an image of  $\phi$.

\smallskip
For example, we could take $T$ to be the group $\mathbb{Z}_2$ as its presentation is given by $\langle x ~|~ x^2 = e\rangle$. An example of a structure-preserving map $\phi$ is given by
\begin{center}
$\phi: \mathbb{Z}_2 \to \text{Sym}(\mathbb{R}^2), \quad
e \mapsto \text{\{Do Nothing\}}, \quad  x \mapsto \text{\{Reflection about $y$-axis\}}$.
\end{center}
Indeed,  $\phi(gg') = \phi(g) \circ \phi(g')$ for all $g,g' \in \mathbb{Z}_2$. For instance,
\[
\phi(x) \circ  \phi(x) = \text{\{Ref. about $y$-axis\}$\circ$\{Ref. about $y$-axis\}}
 = \text{\{Nothing\}} ~= \phi(e) ~= \phi(x^2).
\]

Since $\phi(e)$ and $\phi(x)$ correspond respectively to $\begin{psmallmatrix} 1 & 0 \\ 0 & 1 \end{psmallmatrix}$ and $\begin{psmallmatrix} -1 & 0 \\ 0 & 1 \end{psmallmatrix}$, these matrices are solutions to Question~\ref{ques:x2=1}. Further, other reflections of $\mathbb{R}^2$ produce additional  solutions to Question~\ref{ques:x2=1}. (Think about if {\it all} solutions to Question~\ref{ques:x2=1} can be constructed in this manner.)

\vspace{-.15in}

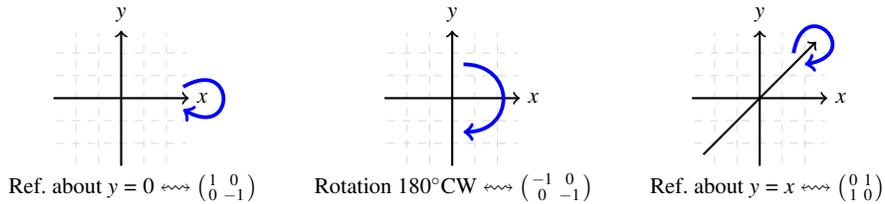
\begin{figure}[h]
\hspace{.2in}
\begin{tikzpicture}[scale=0.3]
\draw[help lines, color=gray!30, dashed] (-2.9,-2.9) grid (2.9,2.9);
\draw[->, thick] (-3,0)--(3,0) node[right]{{$x$}};
\draw[->,thick] (0,-3)--(0,3) node[above]{{$y$}};
\draw[->,line width=.5mm, blue] (2.75,.5) to [out=30,in=330, looseness = 7] (2.75,-.5);
\end{tikzpicture}
\hspace{.7in}
\begin{tikzpicture}[scale=0.3]
\draw[help lines, color=gray!30, dashed] (-2.9,-2.9) grid (2.9,2.9);
\draw[->,thick] (-3,0)--(3,0) node[right]{{$x$}};
\draw[->,thick] (0,-3)--(0,3) node[above]{{$y$}};
\draw[->,line width=.5mm, blue] (.5,1.5) to [out=0,in=0, looseness = 2] (.5,-1.5);
\end{tikzpicture}
\hspace{.7in}
\begin{tikzpicture}[scale=0.3]
\draw[help lines, color=gray!30, dashed] (-2.9,-2.9) grid (2.9,2.9);
\draw[->,thick] (-3,0)--(3,0) node[right]{{$x$}};
\draw[->,thick] (0,-3)--(0,3) node[above]{{$y$}};
\draw[->,thick] (-2.5,-2.5)--(2.5,2.5) ;
\draw[->,line width=.5mm, blue] (1.5,2) to [out=80,in=10, looseness = 10] (2,1.5);
\end{tikzpicture}

Ref. about $y$\;=\;$0$ $\leftrightsquigarrow \begin{psmallmatrix} 1 & 0 \\ 0 & -1 \end{psmallmatrix}$ \hspace{.24in} Rotation $180^\circ$CW $\leftrightsquigarrow \begin{psmallmatrix} -1 & 0\\ 0 & -1 \end{psmallmatrix}$ \hspace{.24in} Ref. about  $y$\;=\;$x$  $\leftrightsquigarrow \begin{psmallmatrix} 0 & 1 \\ 1 & 0 \end{psmallmatrix}$
{\small \caption{Reflections of   $\mathbb{R}^2$
 and the corresponding solution to Question~\ref{ques:x2=1}.} }
\end{figure}

\vspace{-.15in}

Continuing this example, instead of using the abstract group $\mathbb{Z}_2$ we could have used the  {\it group algebra} $T = \mathbb{R} \mathbb{Z}_2$, as it encodes the same information needed to address Question~\ref{ques:x2=1}. We will chat more about abstract algebraic structures in Section~2 (see Figure~5); in any case, their representations are defined informally below.

\begin{definition} \label{def:rep} Given an abstract algebraic structure $T$, we say that a {\it representation} of $T$ is an object $X$ equipped with a structure/ property-preserving map $T \to \text{Sym}(X)$.
\end{definition}

An example of a representation of a group $G$ is a vector space $V$ equipped with a group homomorphism $G \to GL(V)$, where $GL(V)$ is the group of invertible linear transformations from $V$ to itself (e.g., $GL(\mathbb{R}^2) = GL_2(\mathbb{R})$ as discussed above). Just as a representation of $G$ is identified as a {\it $G$-module}, representations of rings and of algebras coincide with {\it modules} over such structures (see Figure~12 below). See also \cite[Chapters~1 and~3]{Lorenz} for further reading and examples. 

\smallskip
Now {\it Representation Theory} is essentially a noncommutative area due to the following key fact. Take $A$ to a be commutative algebra over a field $\Bbbk$ with a representation $V$ of $A$, that is, a $\Bbbk$-vector space $V$ equipped with algebra map $\phi: A \to GL(V)$. If $(V,\phi)$ is  {\it irreducible} [Definition~\ref{def:rep-prop}], then $\dim_\Bbbk V = 1$ \cite[Section~1.3.2]{Lorenz}. Therefore, representations of commutative algebras aren't so interesting.

\smallskip
Moreover, Representation Theory is a vital subject because the problem of finding matrix solutions to equations is quite natural. Since this boils down down to studying representations of algebras that are generally noncommutative, the ubiquity of Noncommutative Algebra is conceivable. (Equations that correspond to representations of groups, like in Question~\ref{ques:x2=1}, are special.)

\bigskip

To introduce the final notion in Noncommutative Algebra that we will highlight in this paper, observe symmetries and representations both occur under an {\it action} of a gadget $T$ on an object $X$, but the difference is that symmetries form the gadget $T$ (what is acting on an object), whereas representations are considered to be the object $X$ (something being acted upon). What happens to these notions if we consider {\bf deformations} of $T$ and $X$? Consider the following informal terminology.

\begin{definition} \label{def:deform} A {\it deformation of an object} $X$ is an object $X_{\text{def}}$ that has many of the same characteristics of $X$, possibly with the exception of a few key features. In particular, a {\it deformation of an algebraic structure} $T$ is an algebraic structure $T_{\text{def}}$ of the same type that shares a (less complex) underlying  structure of $T$.
\end{definition}

For example, a deformation of a ring $R$ could be another ring $R_{\text{def}}$ that equals $R$ as abelian groups, possibly with a different multiplication than that of $R$ (see Figure~5).

\smallskip
 Now if we deform an object $X$, is there a gadget $T_{\text{def}}$ that acts on $X_{\text{def}}$ naturally? On the other hand, if we deform the gadget $T$, is there a natural deformation $X_{\text{def}}$ of $X$ that comes equipped with an action of $T_{\text{def}}$? These are obvious questions, yet the answers are difficult to visualize. 
This is because, visually, symmetries of an object $X$ are destroyed when $X$ is altered, even slightly; see Figure~3 below.

\vspace{.2in}

\noindent  \hspace{.2in} {\small $X$}: \hspace{.4in} {\small Equilateral triangle \hspace{.5in}
Isosceles triangle \hspace{.5in}
Scalene triangle} \\

\vspace{-.4in}

\begin{figure}
 \hspace{.9in} \includegraphics[width=0.15\textwidth]{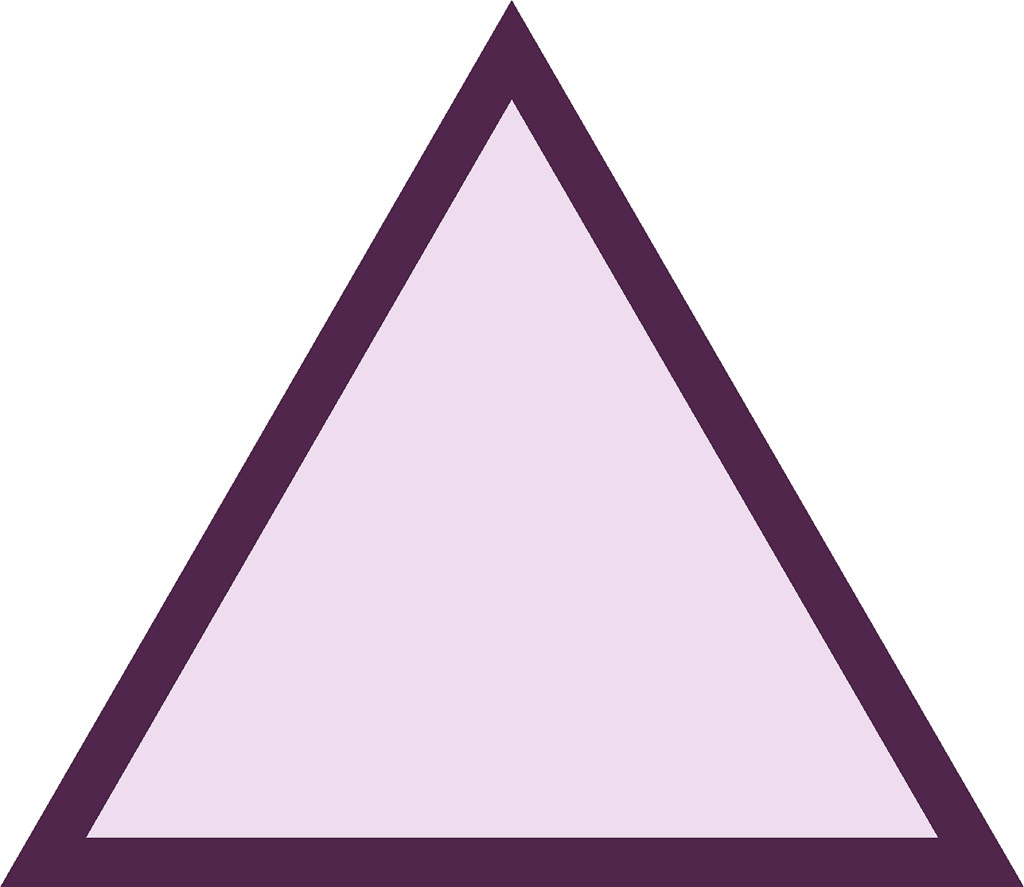}
 \hspace{.7in} \includegraphics[width=0.15\textwidth]{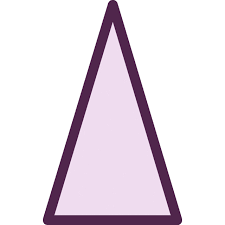}
 \hspace{.6in} \includegraphics[width=0.15\textwidth]{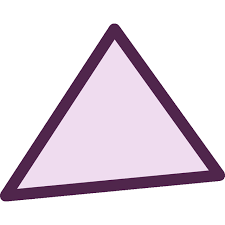}\\
Sym($X$): \hspace{.2in} Dihedral group of order 6 \hspace{.2in}
Cyclic group of order 2 \hspace{.2in}
Trivial group of order 1
{\small \caption{Triangles and their respective symmetry groups}}
\end{figure}

\vspace{-.1in}

 So we need to think beyond what can be visualized and consider a larger mathematical framework that handles symmetries under  deformation. To do so, it is essential to think beyond group actions, because, many classes of groups, including finite groups, do not admit deformations. However, {\it group algebras} or {\it function algebras on groups} do admit deformations, so we include these gadgets in the improved framework to study symmetries. We will see later in Section~4 that when symmetries are recast in the setting where they could be preserved under deformation,  other interesting and more general algebraic gadgets like {\it bialgebras} and {\it Hopf algebras} arise in the process. This is crucial in Noncommutative Algebra as some of the most important rings, especially those arising in physics, are noncommutative deformations of commutative rings; the symmetries of such deformations deserve attention.

\medskip

\begin{center}
\begin{tabular}{c}\;\; \textit{{\bf Symmetries}, {\bf Representations}, and {\bf Deformations}  will play a key role}\\  \textit{throughout this article, just as they do in Noncommutative Algebra.}\end{tabular}
\end{center}

\medskip

The remainder of the paper is two-fold: first, we will review three historical snapshots of how Noncommutative Algebra played a prominent role in mathematics and physics. We will discuss William Rowen Hamilton's Quaternions in Section~2 and the birth of Quantum Mechanics in Section~3. We will also briefly discuss the emergence of Quantum Groups in Section~4, together with the concept of Quantum Symmetry.  In Section~5 we present a couple of research avenues for further investigation. All of the material here is by no means exhaustive, and many references will be provided throughout.


\section{Hamilton's Quaternions (1840s - 1860s)}
\label{sec:hamilton}

Can {\it numbers} be noncommutative? The best answer is,  as always, ``Sure, why not?" 

\begin{wrapfigure}{r}{0.25\textwidth}
\vspace{-.3in}
  \begin{center}
    \includegraphics[trim=0cm 0cm 0cm 0cm,clip=true,width=3.3cm]{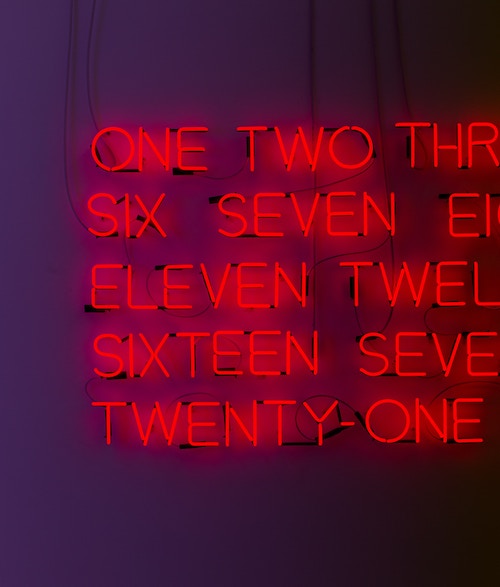}
  \end{center}
 {\small  \caption{Numbers that we all know and love... but we should love more! }}
  \vspace{-.5in}
\end{wrapfigure}
In this section we will explore a {number system that generalizes both the systems of real numbers $\mathbb{R}$ and complex numbers $\mathbb{C}$. The key feature of this new collection of numbers --the {\it quaternions} $\mathbb{H}$--  is that they have a noncommutative multiplication! This feature caused a bit of ruckus for William Rowan Hamilton (1805-1865) after his discovery of the quaternions in the mid-19th century.

\medskip

\begin{quote}
``Quaternions came from Hamilton after really good work had been done; and, though beautifully ingenious, have been an unmixed evil to those who have touched them in any way..."

-- Lord Kelvin, 1892
\end{quote}

\medskip

Now what do we mean by a {\it number system}? Loosely speaking, it is a {\it set of quantities} used to measure or count (a collection of) objects, which is equipped with an {\it algebraic structure} [Figure~5]. 

\begin{figure}[h] 
\centering \includegraphics[width=0.8\textwidth]{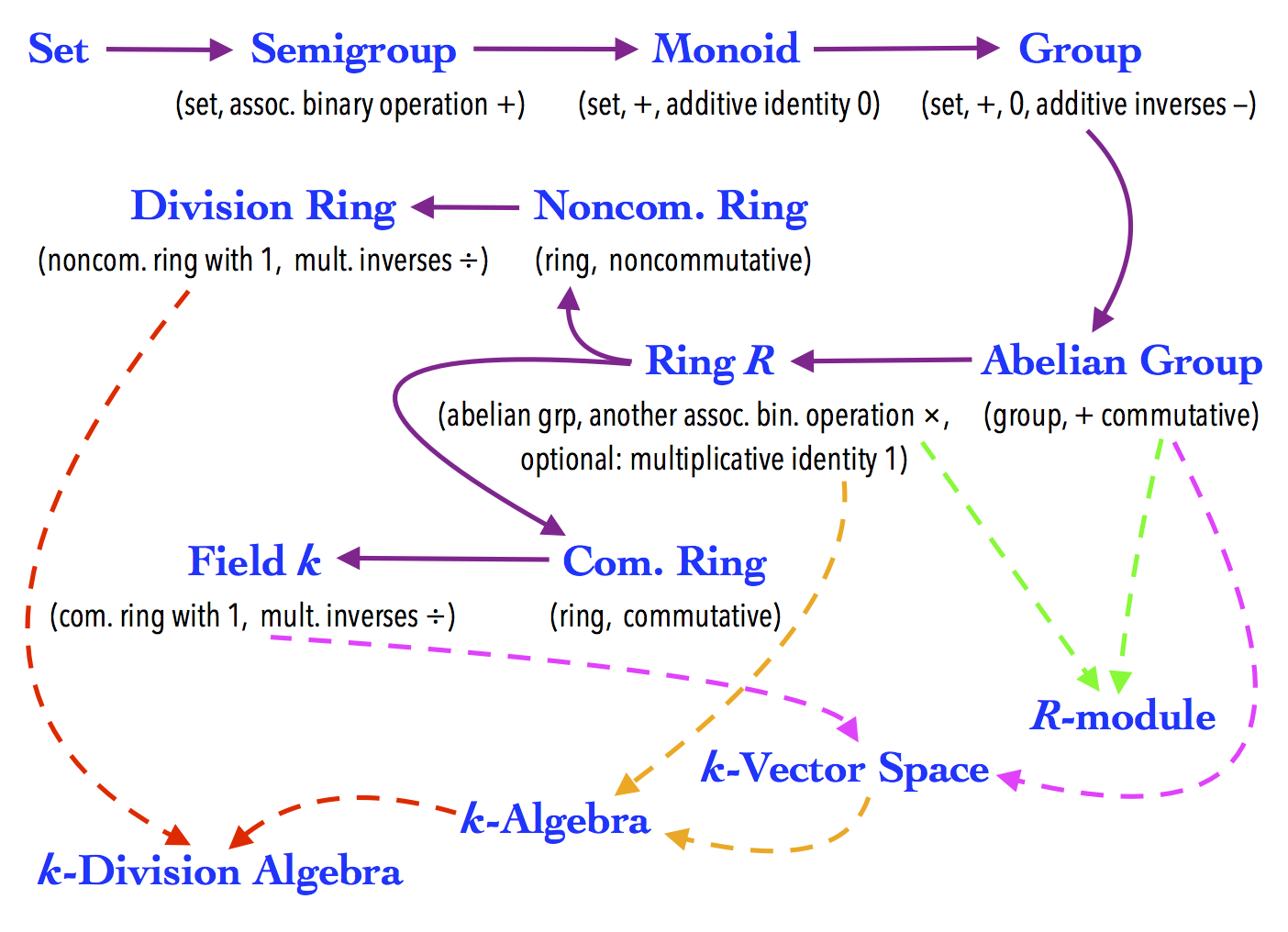}
{\small \caption{Some algebraic structures. Straight arrows denote structures increasing in complexity. Dashed arrows denote structures merging compatibly to form another structure.}} \label{fig:algstr}
\vspace{-.15in}
\end{figure}

Since we should be able to add, subtract, multiply and divide numbers, we consider the following terminology.

\begin{definition} \label{def:div} Fix $n \in \mathbb{Z}_{\geq 1}$.
An {\it $n$-dimensional division algebra $D$ over $\mathbb{R}$} consists of the set of $n$-tuples of real numbers $\underline{a}:=(a_1, a_2, \dots, a_n)$, $a_i \in \mathbb{R}$, with ${\bf 0}:=(0, 0, \dots, 0)$ and a unique element designated as ${\bf 1}$ so that 
\begin{itemize} 
\item we can add and subtract two $n$-tuples $\underline{a}$ and $\underline{b}$ component-wise to form $\underline{a + b}$ and $\underline{a - b}$ in $D$, respectively;
\item we can multiply $\underline{a}$ by a scalar $\lambda \in \mathbb{R}$ component-wise to form $\underline{\lambda \ast a}$ in $D$; 
\item there is a rule for multiplying $\underline{a}$ and $\underline{b}$ to form $\underline{a \cdot b}$ in $D$ (this is not necessarily done component-wise, nor does it need to be commutative); and
\item there is a rule for dividing $\underline{a}$ by $\underline{b} \neq {\bf 0}$ to form $\underline{a \div b}$ in $D$;
\end{itemize}
in such a way that 
\begin{enumerate}
\item[(i)] $(D, +, -, {\bf 0})$ is an {\it abelian group}, 
\item[(ii)] $(D, +, -, {\bf 0}, \ast)$ is an {\it $\mathbb{R}$-vector space}, 
\item[(iii)] $(D, \cdot, {\bf 1})$ is an associative {\it unital ring}, and 
\item[(iv)] $(D, +, -, \ast, \cdot, {\bf 0}, {\bf 1})$ is an associative $\mathbb{R}$-{\it algebra} 
\end{enumerate}
all in a compatible fashion (e.g. $\cdot$ distributes over +, etc.).
\end{definition}

As one can imagine, there are not many of these gadgets floating around as they have a {\it lot} of structure. A {\it $1$-dimensional division algebra $D$ over $\mathbb{R}$} must be the field $\mathbb{R}$ itself. Moreover, a {\it $2$-dimensional division algebra $D$ over $\mathbb{R}$} is isomorphic to the field of complex numbers $\mathbb{C}$, where the pair $(a_1, a_2)$ is identified with the element $a_1 + a_2 i$ for $i^2 = -1$. 
The algebraic structure for the pairs then follows accordingly, 
\begin{wrapfigure}{l}{0.35\textwidth}
\vspace{-.23in} 
\centering \includegraphics[width=0.3\textwidth]{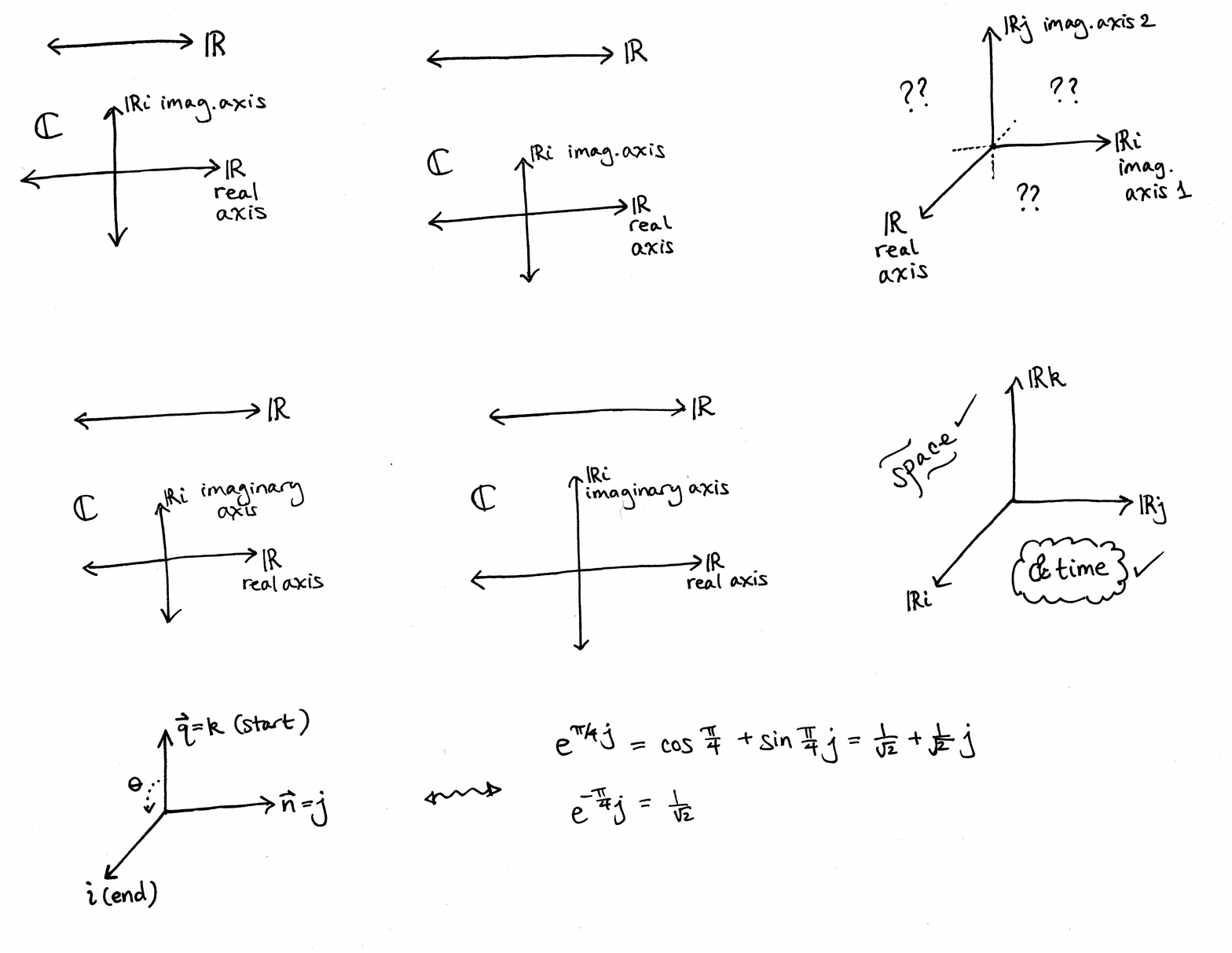}
{\small \caption{The real line, and the complex plane visualized as $\mathbb{R}^2$.}}
\vspace{-.33in} 
\end{wrapfigure}
e.g., the multiplication of $\mathbb{C}$ is given by $$(a_1, a_2) \cdot (b_1, b_2) = (a_1b_1-a_2b_2, ~ a_1b_2+a_2b_1).$$ Note that the 1- and 2- dimensional real division algebras, $\mathbb{R}$ and $\mathbb{C}$, are commutative rings, and these can be viewed geometrically as in Figure~6.

\smallskip

A natural question is then the following.

\begin{question} \label{ques:div}
What are the $n$-dimensional real division algebras for $n \geq 3$?
\end{question}

Hamilton obsessed over this question, especially the $n=3$ case, for over a decade. Even his children would routinely ask him, ``Papa, can you multiply triplets?".
\begin{wrapfigure}{r}{0.42\textwidth}
\vspace{-.23in} 
\centering \includegraphics[width=0.42\textwidth]{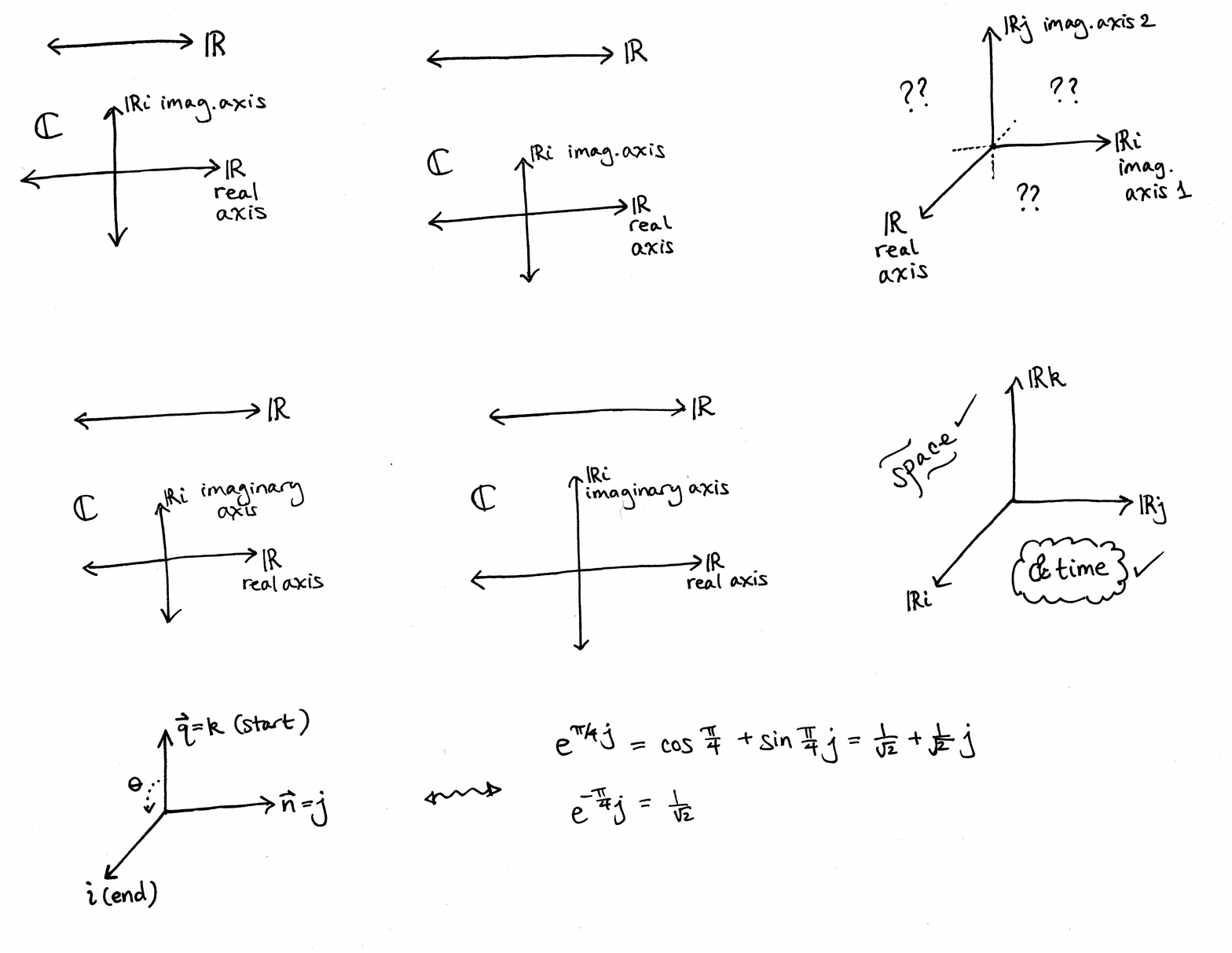}
{\small \caption{A failed attempt at a 3-dimensional number system.}}
\end{wrapfigure}
\begin{wrapfigure}{l}{0.35\textwidth}
  \begin{center}
  \vspace{-.3in} 
    \includegraphics[width=0.35\textwidth]{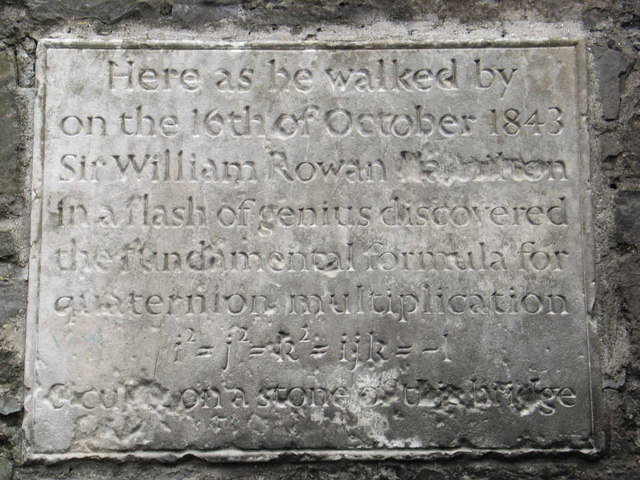}
  \end{center}
 {\small \caption{Plaque on Brougham Bridge in Dublin, recognizing Hamilton's invention}}
  \vspace{-.2in}
\end{wrapfigure}

\vspace{-.1in}
\noindent His initial ideas were to use two imaginary axes $i$ and $j$ so that the 3-tuples $(a_1, a_2, a_3)$ of a 3-dimensional number system correspond to $a_1 + a_2 i + a_3 j$.
However, he could not cook up rules that $i$ and $j$ should obey to make this collection of triples a valid division algebra \cite{Hamilton, May, vdW}. According to some mathematicians, this obsession was quite `Mad' \cite{Bayley, NPR}.

\smallskip

Finally, on October 16th 1843, on a walk with his wife in Dublin, Hamilton had a moment of Eureka!  In his words to his son Archibald,

\smallskip

\begin{quote}
``An {\it electric} circuit seemed to close; and a spark flashed forth, the herald, as I {\it foresaw immediately}, of many long years to come of definitely directed thought and work [...] Nor could I resist the impulse, unphilosopical as it may have been,  to cut with a knife on the stone of Broughham Bridge, as we passed it, the fundamental formula with the symbols $i$, $j$, $k$; namely 
$i^2 = j^2 = k^2 = ijk = -1,$ which contains the {\it solution} of the {\it problem}..."

-- W. R. Hamilton, August 5th, 1865
\end{quote}

Hamilton had discovered that day a number system generalizing both $\mathbb{R}$ and $\mathbb{C}$, consisting of {\it 4-tuples} of real numbers, not constructed from triplets as he had imagined for so long  \cite{Hamilton}.

\begin{definition}\label{def:H}
The {\it quaternions} is a 4-dimensional real division algebra, denoted by $\mathbb{H}$, comprised of 4-tuples of real numbers $\underline{a} := (a_0, a_1, a_2, a_3)$, which are identified as elements of the form 
$$a_0 + a_1i +a_2j + a_3k, \quad \text{ for } a_i \in \mathbb{R},$$
where addition, subtraction and scalar multiplication are performed component-wise, and multiplication and division are governed by the rule
$$i^2 = j^2 = k^2 = ijk = -1.$$
\end{definition}

\noindent Observe  that $jk = i$, whereas $kj = -i$. Therefore, $\mathbb{H}$ is a noncommutative ring!

\pagebreak

In any case, notice that the multiplication rule of $\mathbb{H}$ is a bit complicated:

\begin{equation} \label{eq:long}
(a_0, a_1, a_2, a_3) \cdot (b_0, b_1, b_2, b_3) = 
{\small \left(\begin{array}{c} 
a_0b_0 - a_1b_1 - a_2b_2 - a_3b_3,\\
a_0b_1 +a_1b_0 +  a_2b_3 - a_3b_2,\\
a_0b_2 - a_1b_3 + a_2b_0 +  a_3b_1 ,\\
a_0b_3 + a_1b_2 - a_2b_1 + a_3b_0  
\end{array} \right)}
\end{equation}

\noindent... and let's not commit this rule to memory. To circumvent this issue  Hamilton gave the quaternions a geometric realization that encodes their multiplication. Namely, for $\underline{a} := a_0 + a_1i +a_2j + a_3k \in \mathbb{H}$, let
\begin{center}
\begin{tabular}{l}
$a_0$  \; be the ``scalar" component of $\underline{a}$, and \\
 $\overrightharp{a} :=a_1i +a_2j + a_3k$ \; be the ``vector" component of $\underline{a}$. 
\end{tabular}
\end{center}
Then, the vector components are visualized as points/ vectors in $\mathbb{R}^3$, whereas the scalar component is realized as an element of {\it time}. 
See, for instance, the footnote on page~60 and other parts of the preface of \cite{HamiltonLectures} for Hamilton's original thoughts on the connection between the quaternions and the laws of space and  time.  (Yes, yes, this was all very controversial back then!)

\smallskip
Hamilton then devised two vector operations, now known as the {\it dot product} ($\bullet$) and {\it cross product} ($\times$) to make the multiplication rule of $\mathbb{H}$ more compact:

\begin{equation} \label{eq:short}
 \underline{a} \cdot \underline{b} = \left[a_0b_0 ~- ~\overrightharp{a} \bullet \overrightharp{b}\right] + \left[a_0 \overrightharp{b} ~+ ~b_0 \overrightharp{a} ~+~ \overrightharp{a} \times \overrightharp{b}\right], \quad \quad \forall \underline{a}, \underline{b} \in \mathbb{H}.
\end{equation}
Not only is formula \eqref{eq:short} easier to retain than \eqref{eq:long}, the (commutative) dot product and (noncommutative) cross product have appeared  in various parts of mathematics
\begin{wrapfigure}{l}{0.4\textwidth}
\vspace{-.25in}
\centering \includegraphics[width=0.4\textwidth]{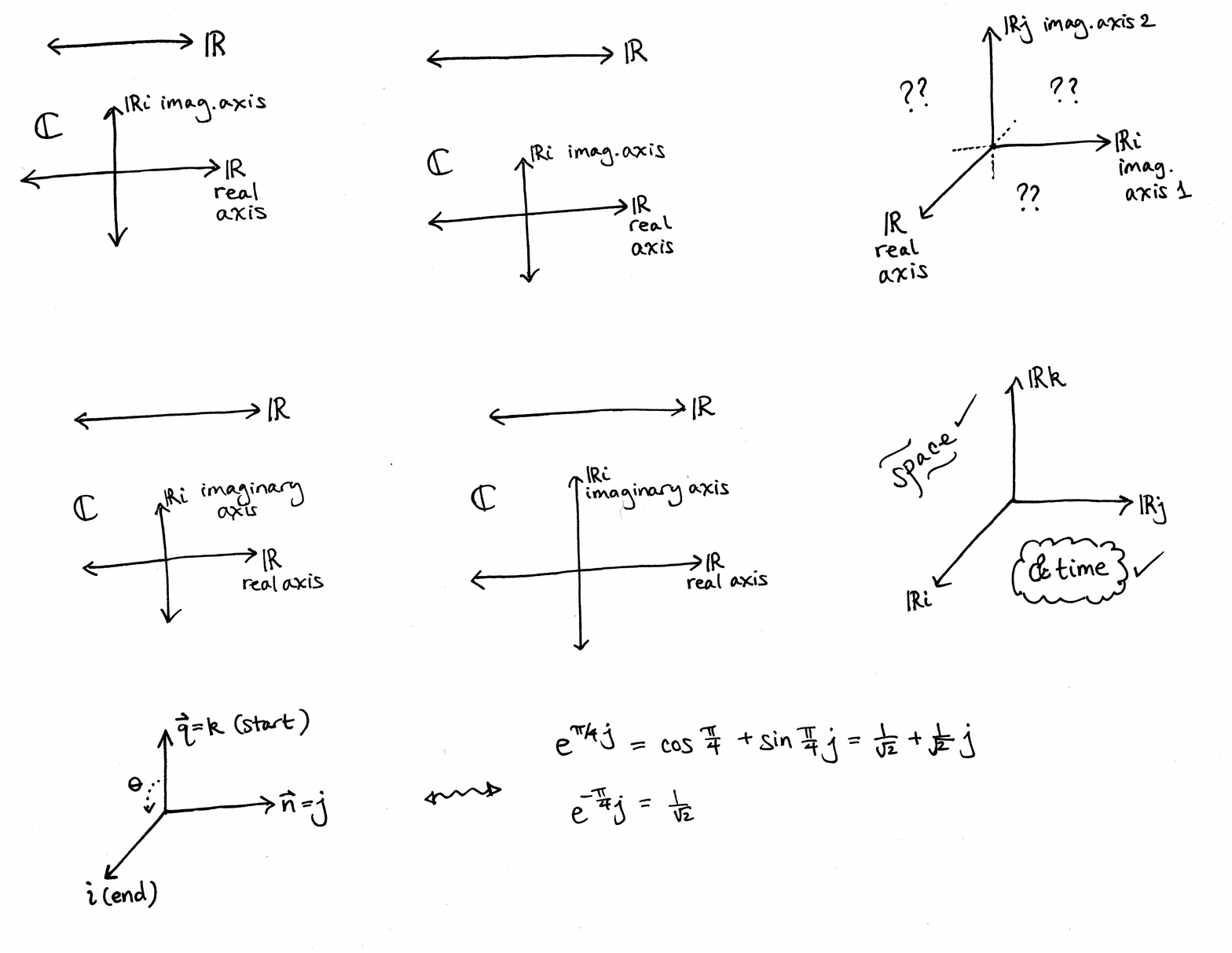}
{\small \caption{A successful attempt at a 4-dimensional number system.}}
\vspace{-.2in}
\end{wrapfigure}
  and physics throughout the years, including our multi-variable calculus courses.

\smallskip
Geometrically, the operations in $\mathbb{H}$ capture symmetries of $\mathbb{R}^3$ [Definition~\ref{def:sym}]: Addition/ substraction, scalar multiplication, and multiplication/ division correspond respectively to translation, dilation, and rotation of vectors of $\mathbb{R}^3$; see, e.g., \cite[page ~272]{HamiltonLectures} and \cite{Kuipers} for a discussion of rotation. To see rotations in action, first note that the  {\it length} of a quaternion \underline{$a$} is given by 
$$|\underline{a}| := \sqrt{a_0^2 + a_1^2 +a_2^2 + a_3^2}.$$
Next, fix an {\it axis of rotation} $\overrightharp{n} :=n_1i +n_2j + n_3k$ with $|\overrightharp{n}| = 1$, a quaternion of unit-length. Then, rotating a vector $\overrightharp{q}$ about the axis $\overrightharp{n}$ clockwise by $\theta$ radians (when viewed from the origin) corresponds to {\it conjugating} $\overrightharp{q}$ by the quaternion $e^{{\frac{\theta}{2}}\overrightharp{n}}$. It's helpful to use here an extension of Euler's formula, $e^{{\frac{\theta}{2}}\overrightharp{n}} = \cos(\textstyle\frac{\theta}{2}) + \sin(\frac{\theta}{2})\overrightharp{n}$,  to understand the quaternion $e^{{\frac{\theta}{2}}\overrightharp{n}}$. An example is given in Figure~10 below.

\begin{figure}
\includegraphics[width=\textwidth]{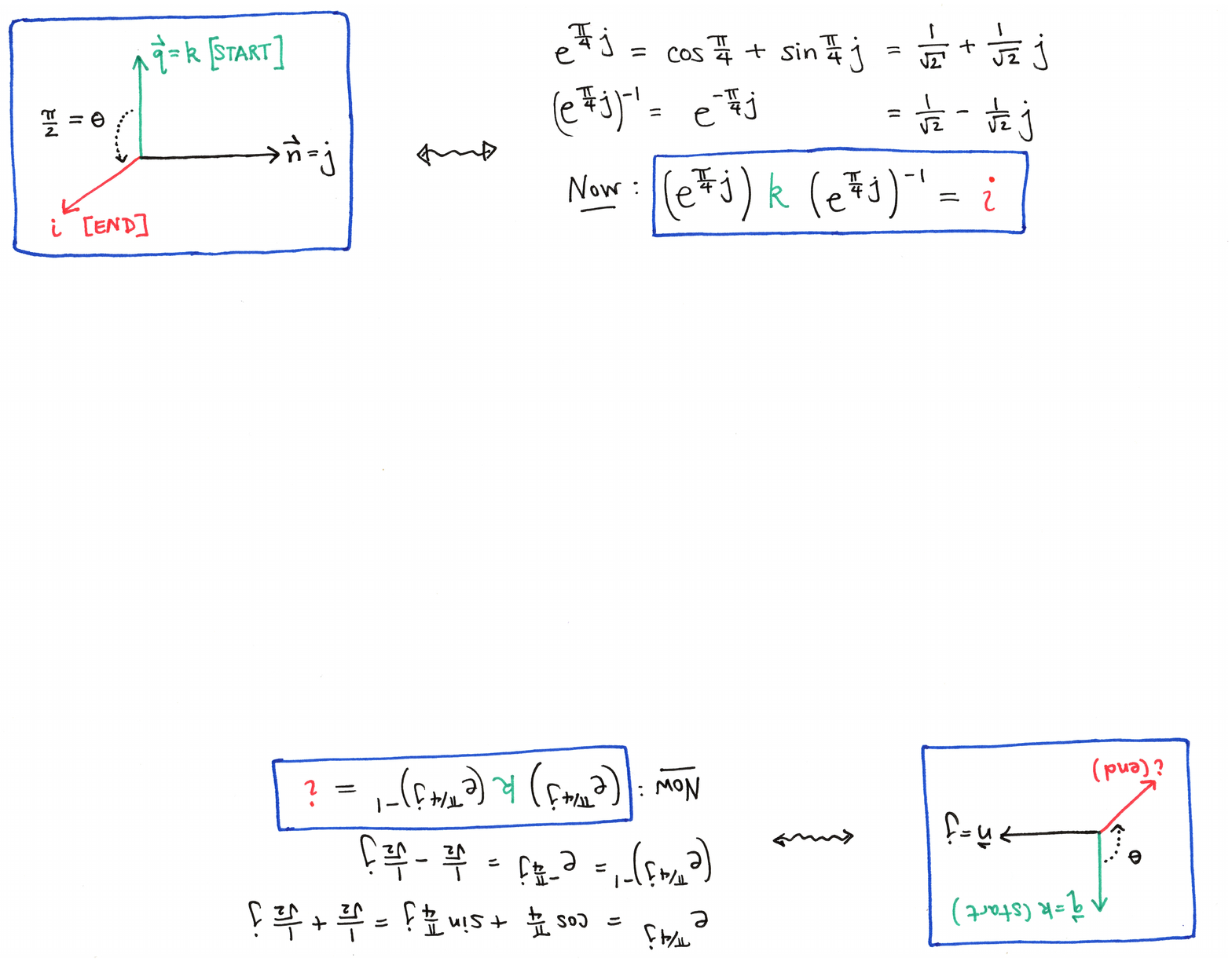}
{\small \caption{Rotating vector $k$ about axis $j$ by $\frac{\pi}{2}$ radians $\leftrightsquigarrow$ Conjugating $k$ by quaternion $e^{{\frac{\pi}{4}}j}$.}}
\end{figure}

 Moreover, rotations of $\mathbb{R}^3$ can be encoded as  a representation [Definition~\ref{def:rep}] of 
 the multiplicative subgroup $U(\mathbb{H})$ consisting of unit-length quaternions. Indeed, we have a group homomorphism 
$$\quad \quad U(\mathbb{H}) \longrightarrow GL(\mathbb{R}^3) = GL_3(\mathbb{R}), \quad \text{ given by}$$
{\small
$$\begin{array}{c}
\smallskip
a_0 + a_1 i + a_2 j + a_3 k\\
 \text{ with } |\underline{a}| = 1
 \end{array} \quad \mapsto  \quad
\begin{pmatrix} 
\smallskip
 a_0^2+a_1^2-a_2^2-a_3^2& \;\; \quad 2 a_1 a_2 - 2 a_0 a_3\; \; \quad &  2 a_1 a_3 + 2 a_0 a_2\\
\smallskip
 2 a_1 a_2 + 2 a_0 a_3 &  a_0^2-a_1^2+a_2^2-a_3^2 & 2 a_2 a_3 - 2 a_0 a_1\\
 2 a_1 a_3 - 2 a_0 a_2 & 2 a_2 a_3 + 2 a_0 a_1 &  a_0^2-a_1^2-a_2^2+a_3^2
 \end{pmatrix}.$$
 }

This geometric realization of $\mathbb{H}$ has many modern applications-- we  refer  to the text \cite{Kuipers:text} for a nice self-contained discussion of applications to computer-aided design, aerospace engineering, and other fields.

\smallskip
Returning to Question~\ref{ques:div},  its answer is now given below.

\begin{theorem}  \textnormal{\cite{Frobenius}, \cite{Palais}, \cite[Theorem~13.12]{Lam}, \cite{BM}, \cite{Kervaire}, \cite{Zorn}}
The answer to Question~\ref{ques:div} is Yes if and only if $n = 1, 2, 4, 8$. Such division algebras $D$ are unique up to isomorphism in their dimension with isomorphism class represented by 

\hspace{-.1in}
\begin{tabular}{llllll}
$\bullet$ &the real numbers $\mathbb{R}$  \;  &for $n=1$, \quad \quad\quad \quad  &
$\bullet$ &the complex numbers $\mathbb{C}$ \quad & for $n=2$, \\
$\bullet$ &quaternions $\mathbb{H}$  &for $n=4$, &
$\bullet$ &octonions $\mathbb{O}$  &for $n=8$.
\end{tabular} 

\smallskip

\noindent Here, $D$ is commutative only when $n=1,2$, and is associative only when $n=1,2,4$. 
\end{theorem}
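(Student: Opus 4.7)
The plan is to split the theorem into three pieces of decreasing ease: (a) the existence of $\mathbb{R}, \mathbb{C}, \mathbb{H}, \mathbb{O}$ in the listed dimensions; (b) the uniqueness of these as the only \emph{associative} examples (Frobenius); and (c) the hard claim that no other dimension $n$ can occur at all, even without associativity.

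For (a), existence is essentially in hand: $\mathbb{R}$ and $\mathbb{C}$ are classical, $\mathbb{H}$ is produced in Definition~\ref{def:H}, and $\mathbb{O}$ can be constructed via the \emph{Cayley-Dickson doubling}: on $\mathbb{H} \oplus \mathbb{H}$, set $(a,b)(c,d) := (ac - \bar{d}\, b,\ da + b\bar{c})$, with conjugation $\overline{(a,b)} := (\bar{a}, -b)$. A direct calculation verifies that each doubling step from $\mathbb{R}$ to $\mathbb{C}$ to $\mathbb{H}$ to $\mathbb{O}$ preserves the division-algebra property but loses exactly one feature at a time: order, then commutativity, then associativity. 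This immediately handles the final sentence of the theorem. (The next doubling, to the sedenions, fails to be a division algebra, which is the first reality-check that the list stops here.)

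For (b), I would prove Frobenius's theorem classifying the \emph{associative} finite-dimensional real division algebras as $\mathbb{R}, \mathbb{C}, \mathbb{H}$. Given such $D$, every $x \in D$ generates a commutative finite-dimensional $\mathbb{R}$-subalgebra without zero divisors, hence $\mathbb{R}[x] \cong \mathbb{R}$ or $\mathbb{C}$, so every $x \in D \setminus \mathbb{R}$ satisfies a quadratic over $\mathbb{R}$ with negative discriminant. Define the ``pure imaginary part'' $V := \{\, y \in D : y \notin \mathbb{R} \text{ and } y^2 \in \mathbb{R}_{\le 0} \,\} \cup \{0\}$ and check that $D = \mathbb{R}\cdot 1 \oplus V$ as $\mathbb{R}$-vector spaces. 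The assignment $\langle y_1, y_2 \rangle := -\tfrac{1}{2}(y_1 y_2 + y_2 y_1)$ then lands in $\mathbb{R}$ and defines a positive-definite inner product on $V$. If $\dim_{\mathbb{R}} V \le 1$ we recover $\mathbb{R}$ or $\mathbb{C}$; otherwise, pick orthonormal $i,j \in V$ and set $k := ij$. The inner-product relations force $i^2 = j^2 = -1$ and $ij = -ji$, from which $k^2 = -1$, $jk = i$, $ki = j$ follow by associativity, so $\mathbb{R}\langle 1,i,j,k \rangle \cong \mathbb{H}$ sits inside $D$. A standard dimension-count argument (any further vector orthogonal to this copy would yield contradictory quadratic relations with $i, j, k$) forces $\dim_{\mathbb{R}} D = 4$.

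For (c), the genuinely hard piece, I would appeal to the \emph{Bott-Milnor-Kervaire theorem}: any finite-dimensional real division algebra, associative or not, has dimension in $\{1,2,4,8\}$. The elementary reduction is that a bilinear multiplication $\mathbb{R}^n \times \mathbb{R}^n \to \mathbb{R}^n$ without zero divisors equips $S^{n-1}$ with an $H$-space structure and produces $n-1$ pointwise linearly independent tangent vector fields on $S^{n-1}$, so $S^{n-1}$ must be parallelizable. By Adams's theorem on vector fields on spheres (equivalently, Hopf invariant one), proved using real K-theory and secondary cohomology operations, this forces $n-1 \in \{0,1,3,7\}$. This is the real obstacle in the proof: the algebraic reductions in (a) and (b) are elementary, but ruling out dimensions such as $3, 5, 6, 7$ genuinely requires twentieth-century algebraic topology and cannot be done with linear algebra alone.
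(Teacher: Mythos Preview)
The paper does not prove this theorem at all: it is stated with a string of citations (Frobenius, Palais, Lam, Bott--Milnor, Kervaire, Zorn) and then the narrative moves on. So there is no ``paper's own proof'' to compare against; your outline is effectively a reader's guide to what those references contain, and in that capacity it is accurate. Your three-part decomposition into (a) existence via Cayley--Dickson, (b) Frobenius's classification of the associative case, and (c) the Bott--Milnor--Kervaire/Adams topological obstruction is exactly how the cited literature splits the work, and your identification of (c) as the genuinely deep step is correct.

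One substantive gap worth flagging: you do not address the \emph{uniqueness} assertion for $n=8$, and in fact this is where the theorem as stated is delicate. Without an extra hypothesis such as alternativity (which is what Zorn's cited paper supplies), there exist non-isomorphic $8$-dimensional real division algebras, so ``unique up to isomorphism'' is false for bare not-necessarily-associative division algebras. The paper's own Definition~\ref{def:div} actually requires associativity in clauses (iii)--(iv), which would exclude $\mathbb{O}$ entirely; the intended reading is presumably ``alternative'' rather than ``associative,'' and under that reading Hurwitz--Zorn gives the uniqueness. Your sketch of Frobenius handles uniqueness for $n\le 4$ cleanly, but a complete treatment should either invoke the Hurwitz/Zorn classification of alternative division algebras for $n=8$ or note explicitly that uniqueness needs that extra hypothesis.
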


So, Hamilton discovered the last associative finite-dimensional real division algebra, but the price that he had to pay (at least mathematically) was the loss of commutativity. Perhaps this was not too high of a price--  we are certainly willing to lose ordering when choosing to work with $\mathbb{C}$ instead of $\mathbb{R}$. If we are also willing to part with associativity, then the octonions $\mathbb{O}$ is a perfectly suitable number system; see \cite{Baez} for more details. And, of course, there are further generalizations of number systems-- see \cite{Dickson, Lewis, Wiki-CD} to start, and go wild!

\medskip
We return to the quaternions later in Section~5.1 for a discussion of potential research directions.


\section{The Birth of Quantum Mechanics (1920s)}
\label{sec:mechanics}

Another  period that sparked an interest in Noncommutative Algebra was  the birth of Quantum Mechanics in the 1920s.   Three of the key figures during  
this time were Max Born (1882-1970), Werner Heisenberg (1901-1976), and Paul Dirac (1902-1984), who were all curious about the behavior of subatomic particles \cite{BornJ, Heisenberg, Dirac}.

\begin{figure}[h]
\vspace{-.1in}
  \centerline{\includegraphics[width=0.8\textwidth]{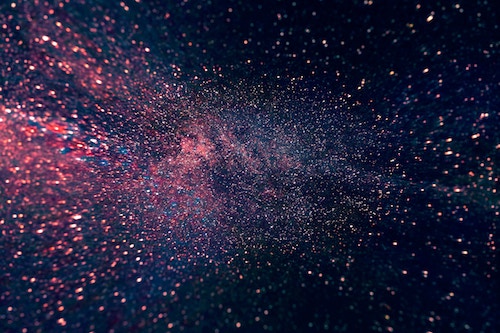}}
  \small{\caption{``More than anything, this photograph was really the result of a series of little accidents."   -- Billy Huynh,  photographer
  $\dots$ So is good mathematics!}}
  \vspace{-.1in}
  \end{figure} 
  
  Along with their colleagues, Born, Heisenberg, and Dirac believed that important aspects of subatomic behavior are those that could be {\it observed} (or measured). However, the tools of classical mechanics  available at the time (with {\it observables} corresponding to real-valued functions) were not suitable in  capturing this behavior properly. A new type of mechanics was needed, leading to the development of quantum mechanics where observables are realized as linear operators. For a great account of how this transition took place (some of which we will recall briefly below), see Part II of the  van der Waerden's text \cite{vdW67}. (For historical context of another figure, Pascual Jordan, who also played a role in these developments, see, e.g., \cite{Hentschel}.)
 
 \smallskip
The two observables in which Born, Heisenberg, and Dirac were especially interested were the {\it position} and {\it momentum} of a subatomic particle, and they employed Niels Bohr's notion of {\it orbits} to keep track of these quantities. Mathematically, this boils down to using matrices in order to book-keep data corresponding to the observables under investigation, thus initiating {\it matrix mechanics}.
The surprising outcome of using this new matrix framework in studying subatomic particles was  stated succinctly as follows \cite{Heisenberg2}:

\begin{quote}
The more precisely the position is determined, the less precisely the momentum is known, and vice versa.

-- Heisenberg's ``Uncertainty Principle", 1927 
\end{quote}

More precisely, suppose that $P$ and $Q$ are square matrices of the same size representing the observables momentum and position, respectively. The fact that $P$ and $Q$ do not commute typically (as one expects in classic mechanics) led to the discovery of what Born dubbed as  {\it The Fundamental Equation of Quantum Mechanics}:
\begin{equation} \label{eq:FE}
 PQ- QP = i \hbar \ast I,
 \end{equation}
 Here $i$ is the square root of $-1$, $\hbar$ is Planck's constant, and $i \hbar \ast I$ is the scalar multiple of the identity matrix $I$ of the same size as $P$ and $Q$. For physical reasons, it was known early in the theory of quantum mechanics that matrices $P$ and $Q$ that satisfy Equation \eqref{eq:FE} should be of infinite size, and we will recall a well-known, mathematical proof of this fact later in Proposition~\ref{prop:A1-inf}.

\smallskip
As done in practice by many physicists  and mathematicians, through rescaling let's consider a {\it normalized} version of The Fundamental Equation, as this still captures the spirit of Heisenberg's Uncertainty Principle:

\begin{equation} \label{eq:NFE}
 PQ- QP =  I,
 \end{equation}
 
Now with today's technology, one convenient way of studying matrix solutions $P$ and $Q$ to Equation \eqref{eq:NFE} (or to Equation \eqref{eq:FE}) is to use the {\it theory of representations of (associative) algebras}. To see this connection, first let's fix a field $\Bbbk$ and for ease:

\medskip

\noindent 
\textbf{Standing Hypothesis}. We assume in this section that $\Bbbk$ is a field of  characteristic 0.

\medskip

 Then recall from Definition~\ref{def:div} (and Figure~5) that a {\it $\Bbbk$-algebra} $A$  is a $\Bbbk$-vector space equipped with the structure of a unital ring in a compatible fashion. In this case, $A = (A, +, -, \ast, \cdot, {\bf 0}, {\bf 1})$ where $(A, +, -, \ast, {\bf 0})$ is the $\Bbbk$-vector space structure where $+$ is the abelian group operation and $\ast$ is scalar multiplication, and $(A, \cdot, {\bf 1})$ is a unital ring with $\cdot$ denoting its multiplication. Next, we make our vague notion of representations in Definition~\ref{def:rep} more precise in the context of $\Bbbk$-algebras.

\begin{definition} Consider the following notions.
\begin{enumerate}
\item For a $\Bbbk$-vector space $V$,  the {\it endomorphism algebra} $\text{End}(V)$ on $V$ is an $\Bbbk$-algebra consisting of endomorphisms of $V$ with multiplication being composition $\circ$. (If $V$ is an $n$-dimensional $\Bbbk$-vector space, then $\text{End}(V)$ is isomorphic to the matrix algebra $\text{Mat}_n(\Bbbk)$ with matrix multiplication. Here, $n$ could be infinite.)

\smallskip

\item A {\it representation} of an associative $\Bbbk$-algebra $A$ is a $\Bbbk$-vector space $V$ equipped with a $\Bbbk$-algebra homomorphism $\phi: A \to \text{End}(V)$; say $\phi(a) =: \phi_a \in \text{End}(V)$ for $a \in A$. Namely, for all $a,b \in A$, $\lambda \in \Bbbk$, and $v \in V$, we get that
$$\phi_{a + b}(v) = \phi_{a}(v) + \phi_b(v), \quad \phi_{\lambda \ast a}(v) = \lambda \ast \phi_{a}(v), \quad \phi_{ab}(v) = (\phi_{a} \circ \phi_b)(v). $$

\item The {\it dimension} of a representation $(V, \phi)$ of an associative $\Bbbk$-algebra $A$ is the $\Bbbk$-vector space dimension of $V$, which could be infinite.
\end{enumerate}
\end{definition}

Representations of associative $\Bbbk$-algebras $A$ go hand-in-hand with $A$-modules, as illustrated in Figure~12 below.


\begin{figure}
\centerline{\includegraphics[width=0.7\textwidth]{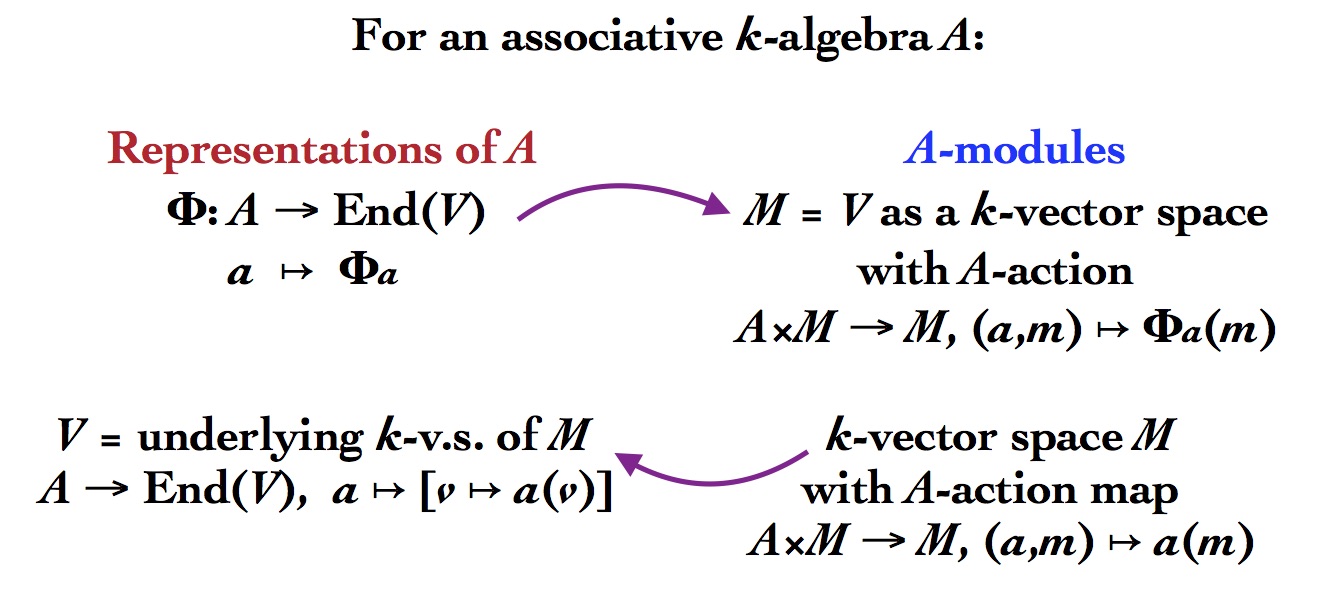}}
{\small \caption{Connection between representations and modules of $\Bbbk$-algebras.}}
\end{figure}

\smallskip
Now for the purposes of finding matrix solutions of Equation~\eqref{eq:NFE}, consider the  $\Bbbk$-algebra defined below.

\begin{definition} \label{def:A1k}
The {\it (first) Weyl algebra} over a field $\Bbbk$ is the $\Bbbk$-algebra $A_1(\Bbbk)$ generated by noncommuting variables $x$ and $y$, subject to relation $yx - xy = 1$. That is, $A_1(\Bbbk)$ has a $\Bbbk$-algebra presentation 
$$A_1(\Bbbk) = \Bbbk \langle x, y \rangle/ (yx - xy -1),$$
given as the quotient algebra of the {\it free algebra} $\Bbbk \langle x, y \rangle$ (consisting of words in variables $x$ and $y$) by the ideal $(yx - xy -1)$ of $\Bbbk \langle x, y \rangle$. 
(This algebra is sometimes referred to as the {\it Heisenberg-Weyl algebra} due to its roots in physics.)
\end{definition}

The Weyl algebra is also the first example of an {\it algebra of differential operators}-- its generators $x$ and $y$ can be viewed as the differential operators on the polynomial algebra $\Bbbk[x]$ given by multiplication by $x$ and $\frac{d}{dx}$, respectively. (Check that $\frac{d}{dx} x - x \frac{d}{dx}$ is indeed the identity operator on $\Bbbk[x]$.)

\smallskip
Returning to the problem of finding $n$-by-$n$ matrix solutions to Normalized Fundamental Equation \eqref{eq:NFE}-- this is equivalent to the task of constructing $n$-dimensional representations of $A_1(\Bbbk)$ as shown in Figure~13 below. In fact, this is why $A_1(\Bbbk)$ is known as the {\it ring of quantum mechanics}. 



\begin{figure}[h]
\vspace{-.1in}
\centerline{\includegraphics[width=0.8\textwidth]{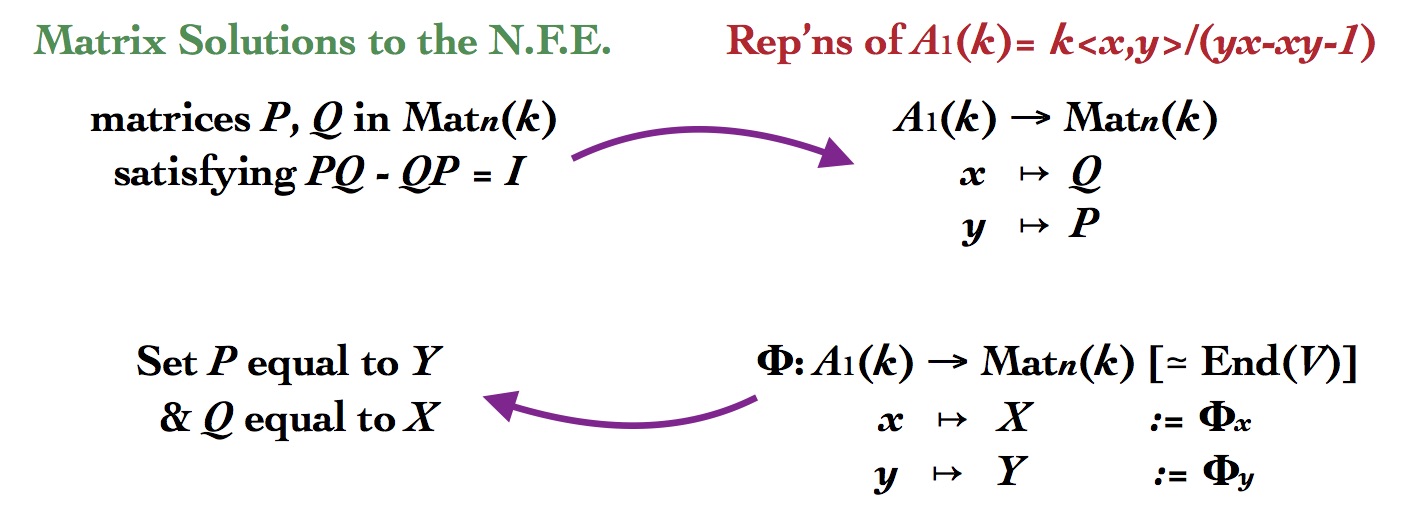}}
{\small \caption{Connection between matrix solutions to N.F.E. and representations of $A_1(\Bbbk)$.}}
\end{figure}

Next, with the toolkit of matrices handy, we obtain a well-known result on the size of matrix solutions to  \eqref{eq:NFE}. We need following facts about the {\it trace} of a square matrix $X$ (which is the sum of the diagonal entries of $X$): $\text{tr}(X \pm Y)  = \text{tr}(X) \pm \text{tr}(Y)$ and $\text{tr}(XY) = \text{tr}(YX)$ for any $X,Y\in \text{Mat}_n(\Bbbk)$.

\begin{proposition} \label{prop:A1-inf}
The Normalized Fundamental Equation \eqref{eq:NFE} does not admit finite matrix solutions, i.e., representations of $A_1(\Bbbk)$ must be infinite-dimensional.
\end{proposition}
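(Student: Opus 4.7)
The plan is to argue by contradiction using the trace functional on square matrices, exploiting the two trace identities recalled just before the statement. Specifically, I will suppose for contradiction that there exist matrices $P, Q \in \text{Mat}_n(\Bbbk)$ with $n$ finite satisfying $PQ - QP = I$, and then apply $\text{tr}$ to both sides.

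The key computation proceeds as follows. First I would apply linearity of the trace to the left-hand side to obtain $\text{tr}(PQ - QP) = \text{tr}(PQ) - \text{tr}(QP)$. Next, I would invoke the identity $\text{tr}(XY) = \text{tr}(YX)$, valid for any pair $X, Y \in \text{Mat}_n(\Bbbk)$ with $n$ finite, to conclude that this difference equals zero. On the right-hand side, the trace of the $n \times n$ identity matrix is simply $n \cdot 1_\Bbbk$. Putting the two computations together yields $0 = n \cdot 1_\Bbbk$ in $\Bbbk$.

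Because of the standing hypothesis that $\text{char}(\Bbbk) = 0$, the element $n \cdot 1_\Bbbk$ is nonzero for any $n \geq 1$, producing the desired contradiction. This shows no finite-size matrices $P, Q$ can satisfy \eqref{eq:NFE}, and hence, via the correspondence indicated in Figure~13 between matrix solutions of the Normalized Fundamental Equation and representations of the Weyl algebra $A_1(\Bbbk)$, every representation of $A_1(\Bbbk)$ must be infinite-dimensional.

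The only subtle point, which I would flag explicitly in the write-up, is where the finiteness of $n$ enters: the cyclicity identity $\text{tr}(XY) = \text{tr}(YX)$ fails (or rather, ceases to make sense as an equality of scalars) for infinite matrices, so no contradiction arises in the infinite-dimensional case — consistent with the fact that $A_1(\Bbbk)$ does admit the faithful infinite-dimensional representation on $\Bbbk[x]$ by multiplication and differentiation mentioned just after Definition~\ref{def:A1k}. The characteristic zero hypothesis is also essential, since in characteristic $p > 0$ one can find $p \times p$ matrix solutions; this is worth mentioning as a remark but lies outside the scope of the proposition as stated.
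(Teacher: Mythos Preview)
Your proposal is correct and follows essentially the same approach as the paper: assume a finite-dimensional solution, apply the trace to $PQ-QP=I$, and use $\text{tr}(XY)=\text{tr}(YX)$ together with $\text{tr}(I)=n$ to obtain the contradiction $0=n$. The paper's proof is the bare computation, while your additional remarks on where finiteness and characteristic zero enter are accurate and appropriate context.
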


\begin{proof}
By way of contradiction, suppose that we have matrices $P,Q \in \text{Mat}_n(\Bbbk)$ with $0<n<\infty$ so that $PQ-QP = I$. Applying trace to both sides of this equation yields:
$$ 0 ~=~ \text{tr}(PQ) - \text{tr}(PQ) ~=~ \text{tr}(PQ) - \text{tr}(QP) ~=~ \text{tr}(PQ-QP) 
~=~ \text{tr}(I) ~=~ n,$$
a contradiction as desired. \qed
\end{proof}

On the other hand, the first Weyl algebra does have an infinite-dimensional representation. Take, for instance:
{\small
\begin{equation}\label{eq:A1k-inf}
P = \arraycolsep=1.4pt\def\arraystretch{1.2} \begin{pmatrix} 
~0 ~& ~1~ &        &        &          \\ 
       & ~0~ & ~2~ &        &          \\
       &        & ~0~ & ~3~ \;\;&    \vspace{-.08in}       \\
       &        &        &~0~ & \hspace{-.1in} \ddots   \\ 
       &        &        &        & ~\ddots 
\end{pmatrix} 
\quad 
{\normalsize \text{and}}
\quad
Q = \begin{pmatrix} 
~0 ~&  &  &  &  &  \\
~1  & ~0~ & &  &  &  \\
 & ~1~  & ~0~ &  &  &  \\
 &  & ~1~ & 0~ &  &  \\
 &  &  & ~ \ddots ~ & ~\ddots&
\end{pmatrix}.
\end{equation}
}
...And there are many, many more! 

\smallskip

But finding {\it explicit} matrix solutions to equations is computationally difficult in general, especially when the most important representations of an algebra are infinite-dimensional. The power of representation theory, however, is centered on its tools to address more abstract algebraic problems that are (perhaps) related to computational goals. For instance, representation theory may address some of the following questions for a given $\Bbbk$-algebra $A$, which are all quite natural:
\begin{itemize}
\item Do representations of $A$ {\it exist}? If so, what are their {\it dimensions}?
\item When are two representations considered to be the {\it same} (or {\it isomorphic})?
\item Are (some of) the representations of $A$  {\it parametrized} by a geometric object~$\mathcal{X}$? Do isomorphism classes of representations correspond bijectively to points of~$\mathcal{X}$? 
\end{itemize}

\smallskip

We will explore a few of these questions and further notions in Section~\ref{sec:researchrep}  towards a research direction in Representation Theory. 

\smallskip

The representation theory of other algebras of differential operators have also been key in modeling subatomic behavior. This includes Dirac's {\it quantum algebra} that addresses the question of how several position observables ($Q_1, \dots, Q_m$) and momentum observables ($P_1, \dots, P_m$)  commute, generalizing Heisenberg's Uncertainty Principle for $m=1$ \cite{Dirac}. These days Dirac's algebra is now known as the {\it $m$-th Weyl algebra} $A_m(\Bbbk)$, which has $\Bbbk$-algebra presentation:
\begin{equation} \label{eq:Amk}
A_m(\Bbbk) = \frac{\Bbbk \langle x_1, \dots, x_m, y_1, \dots, y_m \rangle}{(x_ix_j - x_jx_i, \quad y_iy_j - y_jy_i, \quad y_ix_j - x_jy_i -\delta_{i,j})}.
\end{equation}
Here, $\delta_{i,j}$ is the Kronecker delta, and the generators $x_i$ and $y_i$ are viewed as elements of $\text{End}(\Bbbk[x_1, \dots, x_m])$ given resp. by multiplication by $x_i$ and partial derivation $\frac{\partial}{\partial x_i}$.

\medskip
Want more physics?? We're in luck-- the representation theory of numerous noncommutative $\Bbbk$-algebras play a vital role in several fields of physics. {\it Some} of these algebras and a physical area in which they appear are listed below. Happy exploring!

\bigskip

\begin{center}
{\small
\begin{tabular}{|c|c|c|}
\hline
\hspace{.1in} Noncommutative $\Bbbk$-Algebras \hspace{.1in} &\hspace{.1in}  Appearance in Physics \hspace{.1in} & \hspace{.1in} Reference (Year)\hspace{.1in} \\
\hline
\hline
 &&\\[-1.2em]
$\mathcal{W}$-{\it algebras} \quad &\quad  Conformal Field Theory \quad & \cite{BS} (1993) \\
  &&\\[-1.2em] 
 \hline 
  &&\\[-1.2em] 
\; {\it 4-dimensional Sklyanin algebras} \; \quad &\quad  Statistical Mechanics\quad & \cite{Sklyanin} (1982) \\
  &&\\[-1.2em] 
 \hline 
 &&\\[-1.2em]
 \; {\it 3-dimensional Sklyanin algebras} \;  \quad &\quad  String Theory \; \quad & \cite{BJL} (2000) \\
  &&\\[-1.2em] 
 \hline 
 &&\\[-1.2em]
  {\it Yang-Mills algebras} \quad &\quad  Gauge Theory \quad & \cite{C-DV} (2002)\\
  &&\\[-1.2em] 
 \hline 
 &&\\[-1.2em]
 {\it Superpotential algebras} \quad &\quad String Theory \quad & \cite{FHVWK} (2006)\\
  &&\\[-1.2em] 
 \hline 
 &&\\[-1.2em]
\; \begin{tabular}{c}
{\it Various Enveloping Algebras}\\
{\it of Lie algebras}
\end{tabular} \; \quad &\quad  * Everywhere * \quad &
Too many to list!
\\   [-1.2em]
  &&\\
 \hline
\end{tabular}
}
\end{center}


\section{Quantum Groups (1980s - 1990s) and Quantum Symmetries}
\label{sec:qgroups}

Let's begin here with a question mentioned in the introduction on the ties between symmetries [Definition~\ref{def:sym}]  and deformations [Definition~\ref{def:deform}].

\begin{question} \label{ques:sym-def}
How do we best handle (i.e., axiomatize, or ``make mathematical", the concept of) symmetries of deformations?
\end{question}

Several answers to this question lead us to use {\it Hopf algebras} [Definition~\ref{def:Hopf}]. But before we give the precise definition of this structure, we point out that Hopf algebras became prominent in mathematics in a few waves, including: its origins in Algebraic Topology \cite{Hopf}, role in Combinatorics \cite{JoniRota}, and abstraction in Category Theory \cite{JoyalStreet}. One tie to Noncommutative Algebra (in the context of Question~\ref{ques:sym-def}) first appeared in the 1980s in statistical mechanics, especially in the {\it Quantum Inverse Scattering Method} for solving quantum integrable systems. The Hopf algebras that arose this way were coined  {\it Quantum Groups} by Vladimir Drinfel'd  \cite{Drinfeld}, and have been a key structure in Noncommutative Algebra and physics ever~since.

\smallskip 

Instead of delving further into historical details, let's now discuss (quantum) symmetries of (deformed) algebras through concrete examples. Fix a field $\Bbbk$, and
recall from Figure~5 that an associative $\Bbbk$-algebra is a  $\Bbbk$-vector space equipped with the structure of a (unital) ring; we consider their deformations below.

\begin{definition}
Fix a $\Bbbk$-algebra $A$. 
A $\Bbbk$-algebra $A_{\text{def}}$ is a {\it deformation} of $A$ if $A_{\text{def}}$ and $A$ are the same as $\Bbbk$-vector spaces, 
but their respective multiplication rules are not necessarily the same. 
\end{definition}

\begin{example} Our running example  of a $\Bbbk$-algebra throughout this section will be the {\it $q$-polynomial algebra}:
$$\Bbbk_q[x,y] = \Bbbk \langle x,y \rangle/ (yx - qxy), \quad \text{for $q \in \Bbbk^\times$},$$
which is the quotient algebra of the free algebra $\Bbbk \langle x, y \rangle$ by the ideal $(yx - qxy)$.
Loosely speaking, $\Bbbk_q[x,y]$ is a $q$-deformation of $\Bbbk[x,y]$ as the former structure `approaches' the latter as $q \to 1$. More explicitly, note that $\Bbbk_q[x,y]$ and $\Bbbk[x,y]$ have the same $\Bbbk$-vector space basis $\{x^i y^j\}_{i,j \geq 0}$, but their multiplication rules differ for $q\neq 1$.
\end{example}

Now we let us examine symmetries of $\Bbbk_q[x,y]$ for $q\neq 1$ versus those of $\Bbbk[x,y]$. For this it is enough to consider {\it degree-preserving symmetries}, i.e., invertible transformations that send the generators $x$ and $y$ to a linear combination of themselves. Namely, let $V = \Bbbk x \oplus \Bbbk y$ be the generating space of $\Bbbk_q[x,y]$ (or $\Bbbk[x,y]$ with $q=1$). We want to pin down which invertible matrices in $GL(V) = GL_2(\Bbbk)$ also induce a symmetry of $\Bbbk_q[x,y]$, and to do so, we need to rewrite $\Bbbk_q[x,y]$ using the notion below. (From now on, we need an understanding of tensor products $\otimes$ and  a nice discussion of this operation can be found in \cite{Conrad2}.)

\begin{definition}
Given a $\Bbbk$-vector space $V$, the {\it tensor algebra} $T(V)$ is the $\Bbbk$-vector space $\bigoplus_{i \geq 0} V^{\otimes i}$ where $V^0 = \Bbbk$, and with multiplication given by concatenation, i.e.,  
$(v_1 \otimes \cdots \otimes v_m)(v_{m+1} \otimes \cdots \otimes v_{m+n}) 
= v_1 \otimes \cdots \otimes v_{m+n}.$
\end{definition}

Ideals $I$ of tensor algebras $T(V)$ are defined as usual, and one can define a quotient $\Bbbk$-algebra given by $T(V)/I$.

\begin{example} The free algebra $\Bbbk \langle x,y \rangle$ is identified with the tensor algebra $T(V)$ on the $\Bbbk$-vector space $V = \Bbbk x \oplus \Bbbk y$: for the forward direction insert  $\otimes$ between variables, and conversely suppress $\otimes$ between variables. The $q$-polynomial algebra $\Bbbk_q[x,y]$ is then identified as the quotient algebra of $T(\Bbbk x \oplus \Bbbk y)$ by the ideal $(y \otimes x - q x \otimes y)$.
\end{example}

Now take $g \in GL(V)$ for $V = \Bbbk x \oplus \Bbbk y$. We want to extend this symmetry on $V$ to a symmetry of $\Bbbk_q[x,y]$ identified as $T(V)/(y \otimes x - q x \otimes y)$. Let's assume that, as in the case for group actions, $g$ acts on $T(V)$ diagonally:
\begin{equation} \label{eq:diag}
g( v \otimes v') := g(v) \otimes g(v'), \quad \forall v,v' \in V.
\end{equation}
Now the question is: When is the ideal $(y \otimes x - q x \otimes y)$ preserved under this action? In fact it suffices to show that
\begin{equation} \label{eq:ideal-pres}
g( y \otimes x - q x \otimes y) = \lambda(y \otimes x - q x \otimes y), \quad \text{for some } \lambda \in \Bbbk,
\end{equation}
since the $g$-action is degree preserving. To be concrete, say $g \in GL(V)$ is given by
\begin{equation} \label{eq:g-conc}
g(x) = \alpha x + \beta y \quad \text{ and } \quad  g(y) = \gamma x + \delta y, \quad \quad \text{for some } \alpha, \beta, \gamma, \delta \in \Bbbk.
\end{equation}
Then, $g( y \otimes x - q x \otimes y) = [g(y) \otimes g(x)] - q [g(x) \otimes g(y)]$, which is equal to
\[
\begin{array}{c}
(1-q)\alpha \gamma (x \otimes x) + (\beta \gamma - q\alpha \delta) (x \otimes y)
+ (\alpha \delta - q\beta \gamma)(y \otimes x) + (1-q)\beta \delta (y \otimes y).
\end{array}
\]
Therefore, the condition \eqref{eq:ideal-pres} is satisfied:

\begin{tabular}{ll}
$\bullet$ \; Always, &\quad  if $q=1$;\\
$\bullet$ \; Only when $\alpha = \delta = 0$ \; or \;  $\beta = \gamma = 0$, \hspace{.5in} &\quad if $q = -1$;\\
$\bullet$ \; Only when  $\beta = \gamma = 0$, &\quad if $q \neq \pm1$.
\end{tabular}

\noindent (Note that in the first case   $\lambda = \alpha \delta - \beta \gamma$, the determinant of $g$ when in matrix form.)

\smallskip
So, when we pass from the commutative polynomial algebra $\Bbbk[x,y]$ to its noncommutative deformation $\Bbbk_q[x,y]$ for $q \neq 1$, the amount of its degree-preserving symmetries shrinks abruptly. This is rather unsatisfying as passing ``continuously" from $\Bbbk[x,y]$ to $\Bbbk_q[x,y]$ does not yield a ``continuous passage" between their respective degree-preserving automorphism groups.  

\smallskip
We need to think beyond group actions like those in \eqref{eq:diag}. In general, we want to construct symmetries of a $\Bbbk$-algebra $T(V)/I$ by (i) considering symmetries of the generating space $V$, (ii) extending those to symmetries of $T(V)$, and then (iii) determining which symmetries in (ii) descend to $T(V)/I$. For step (i), take $V$ to be a representation of an algebraic object $H$, e.g., $H$ could be a group or a $\Bbbk$-algebra. (We often swap back and forth between using ``representations" and ``modules".) For (ii), one needs to tackle the issue of  building a direct sum and tensor product of $H$-representations. The former is  pretty straight-forward-- one can always construct the direct sum of $H$-representations to get another (the first guess is most likely the correct one!). 
But  if we are given two vector spaces $V_1$ and $V_2$ that are  $H$-modules, 

\begin{question} \label{ques:tens}
When is $V_1 \otimes V_2$ an $H$-module? \footnote{In categorical language, this is the question of whether the category of $H$-modules (or of representations of $H$) has a {\it monoidal} structure.}
\end{question}

 If $H$ were a group $G$, then one can give $V_1 \otimes V_2$ the structure of a (left) $G$-module via \eqref{eq:diag}.
We can extend this linearly to get  that $V_1 \otimes V_2$ is a module over a group algebra on $G$. But if $H$ were an arbitrary algebra, then the diagonal action on $V_1~\otimes~V_2$ does not necessarily give it the structure of an $H$-module (as we will see in Remark~\ref{rem:diag}). In fact, to have an action of $H$ on $V_1 \otimes V_2$ we first need \underline{algebra maps}
\[
 \Delta: H \to H \otimes H, \quad \Delta(h) \mapsto \sum h_1 \otimes h_2,\quad \quad  \text{and } \quad \epsilon: H \to \Bbbk.
\]  
Here, we use the {\it Sweedler notation} shorthand to denote elements of $\Delta(H)$. These maps should be compatible in a way that is dual to the manner that the multiplication map $m: H \otimes H \to H$ and unit map $\eta: \Bbbk \to H$ of an algebra are compatible (cf. $m(\eta \otimes \text{id}_H) = \text{id}_H = m(\text{id}_H \otimes \eta)$). That is, after identifying $\Bbbk \otimes H = H =  H \otimes \Bbbk$, 
\begin{equation} \label{eq:coalg}
(\epsilon \otimes \text{id}_H) \circ \Delta = \text{id}_H = (\text{id}_H \otimes \epsilon) \circ \Delta.
\end{equation}

\begin{definition} \cite[Chapter~5]{Radford}
An associative $\Bbbk$-algebra $H = (H, m, \eta)$ is  a $\Bbbk$-{\it bialgebra} if it equipped with algebra maps $\Delta$ ({\it coproduct}) and $\epsilon$ ({\it counit}), so that $(H, \Delta, \epsilon)$ is a {\it coassociative $\Bbbk$-coalgebra}  with the structures $(H, m, \eta)$ and $(H, \Delta, \epsilon)$ being compatible.
\end{definition}

To answer Question~\ref{ques:tens}: If $H$ is a bialgebra, the $H$-module structure on $V_1 \otimes V_2$ is $$\hspace{.7in} h(v_1 \otimes v_2) =:\sum h_1(v_1) \otimes h_2(v_2) \quad \quad \forall h \in H \text{ and } v_1,v_2 \in V.$$ We also get that $\Bbbk$ admits the structure of a {\it trivial} $H$-module via $h(1_\Bbbk) = \epsilon(h) 1_\Bbbk$.

\begin{remark}\label{rem:diag}
We cannot always use a diagonal action-- sometimes a fancier coproduct is needed to address Question~\ref{ques:tens}. To see this, take $H$ to be the 2-dimensional associative $\Bbbk$-algebra $\Bbbk[h]/(h^2)$ (e.g., so that we are considering  linear operators that are the zero map when composed with itself). If the coproduct of $H$ is $\Delta(h) = h \otimes h$, then $\epsilon(h) = 1$ by \eqref{eq:coalg}. But this implies
$0 = \epsilon(h^2) = \epsilon(h)^2 = 1$, a contradiction. To ``fix" this, check that the coproduct $\Delta(h) = h \otimes 1 + 1 \otimes h$ with the counit $\epsilon(h) = 0$ gives $\Bbbk[h]/(h^2)$ the structure of a bialgebra over $\Bbbk$.
\end{remark}

Moreover, one may be interested (in symmetries of) an algebra with generating space $V^*$, the {\it linear dual}; this will play a role later in Section~5.2. To get this, we want $V^*$ to have the induced structure of an $H$-module, and we need  an \underline{anti-algebra-automorphism} $S:H \to H$ of $H$ to proceed.

\begin{definition} \label{def:Hopf} \cite[Chapters 6-7]{Radford}
A $\Bbbk$-bialgebra $H = (H, m, \eta, \Delta, \epsilon)$ is a {\it Hopf algebra} over $\Bbbk$ if there exists anti-automorphism $S:H \to H$ ({\it antipode}) so that 
$$m \circ (S \otimes \text{id}_H) \circ \Delta ~=~ \eta \circ \epsilon ~=~ m \circ (\text{id}_H \otimes S) \circ \Delta.$$
\end{definition}

If $H$ is a Hopf algebra with $H$-module $V$, an action of $H$ on $V^*$ can be given~by \footnote{In this case, the category of $H$-modules is a {\it rigid} monoidal category.} 
$$\hspace{.7in} [h(f)](v) = f[S(h)(v)], \quad \quad \forall h \in H, \; f \in V^*, \; v \in V.$$

\begin{wrapfigure}{r}{0.4\textwidth}
\vspace{-.15in}
\centering \includegraphics[width=0.4\textwidth]{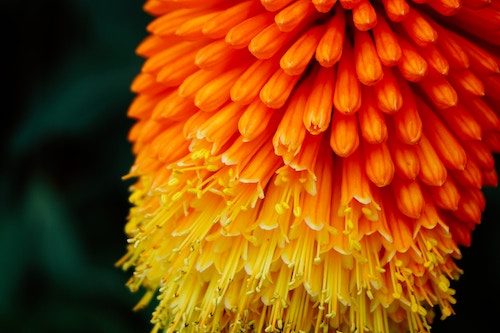}
{\small \caption{Symmetries deforming}}
\vspace{-.2in}
\end{wrapfigure}
Examples of Hopf algebras are {\it group algebras} on finite groups $\Bbbk G$, {\it function algebras} on algebraic groups $\mathcal{O}(G)$, and {\it universal enveloping algebras} of Lie algebras $U(\mathfrak{g})$, which are all considered  ``classical" in the sense that they are {\it commutative} (as an algebra,  $m\circ \tau = m$) or {\it cocommutative} (as a coalgebra, $\tau \circ \Delta = \Delta$), for $\tau(a \otimes b)  = b \otimes a$. Indeed, these Hopf algebras capture the actions of a group on a $\Bbbk$-algebra by automorphism and actions of a Lie algebra on a $\Bbbk$-algebra by derivation. Moreover, deformations (or {\it quantized} versions) of these structures provide a setting to handle  deformations of the aforementioned symmetries (cf. Question~\ref{ques:sym-def}); refer to \cite{AST, Jimbo, Lusztig, Manin} for examples of Hopf algebras arising in this fashion. We also recommend the excellent text on (actions of) Hopf algebras by Susan Montgomery~\cite{Montgomery}.

\smallskip

Now we summarize a few frameworks for studying (quantum) symmetries of a $\Bbbk$-algebra $A$ involving a group $G$ or  a Hopf algebra $H$. See \cite{Montgomery} for more details. \footnote{For more settings of quantum symmetry, see, e.g., \cite[Chapter 11]{Radford} for a categorical~framework.}

\begin{itemize}
\item[\textsc{[G-act]}] \quad \quad \quad  {\it Group actions} on $A$: That is, $A$ is a $G$-module with $G$-action map \linebreak $G \times A \to A$ given by $(g,a) \mapsto g(a)$ satisfying $g(ab) = g(a) \; g(b)$ and $g(1_A) = 1_A$, for all $g \in G$ and $a, b \in A$.

\medskip

\item[\textsc{[G-grd]}] \quad \quad \quad  {\it Group gradings} on $A$: That is, $A$ is $G$-graded if $A = \oplus_{g \in G} ~A_g$, for $A_g$ a $\Bbbk$-vector space, with $A_g \cdot A_h \subset A_{gh}$. When $G$ is finite, this is equivalent to $A$ being acted upon by the {\it dual group algebra} $(\Bbbk G)^*$.

\medskip

\item[\textsc{[H-act]}] \quad \quad \quad  {\it Hopf algebra} or {\it bialgebra actions} on $A$: That is, $A$ is an $H$-module  with $H$-action map $H \times A \to A$ given by $(h,a) \mapsto h(a)$ with $h(ab) = \sum h_1(a) \; h_2(b)$ and $h(1_A) = \epsilon(h) 1_A$, for all $h \in H$ and $a, b \in A$, with $\Delta(h) = \sum h_1 \otimes h_2$. 
\end{itemize}

\smallskip
Finally we end with an example of a Hopf algebra action on $\Bbbk_q[x,y]$, illustrating a scenario where Question~\ref{ques:sym-def}  has a possible answer. For other (more general) examples in the literature, we refer to \cite[Sections~IV.7 and VII.3]{Kassel}.

\begin{example} (A simplified version of \cite[Theorem~VII.3.3]{Kassel}) For ease, we take $\Bbbk$ to be $\mathbb{C}$. Also, let $q$ be a nonzero complex number that's not a root of unity. We aim  to produce an action of a Hopf algebra $H_q$ over $\mathbb{C}$ (whose structure depends on $q$) on the $q$-polynomial algebra $\mathbb{C}_q[x,y] = \mathbb{C} \langle x,y \rangle/ (yx - qxy)$, so that 
\begin{itemize}
\item the ``limit" of $H_q$ as $q \to 1$ is a ``classical" Hopf algebra $H$ (i.e., $H$ is either commutative or cocommutative and $H_q$ is a {\it $q$-deformation} of $H$), and

\smallskip
 
\item the ``limit" of the $H_q$-action on $\mathbb{C}_q[x,y]$ as $q \to 1$ is an action of $H$ on $\mathbb{C}[x,y]$.
\end{itemize}

%
We begin by defining a Hopf algebra $H_q$ with algebra presentation,
$$H_q = \mathbb{C}\langle g, g^{-1}, h \rangle/(gg^{-1} - 1, \; g^{-1}g -1, \; gh - q^2 hg),$$ along with coproduct, counit, and antipode given by
\[
\begin{array}{c}
\Delta(g) = g \otimes g, \quad \Delta(g^{-1}) = g^{-1} \otimes g^{-1}, \quad \Delta(h) = 1 \otimes h + h \otimes g,\\
\epsilon(g) = 1, \quad \epsilon(g^{-1}) = 1, \quad \epsilon(h) = 0, \quad S(g) = g^{-1}, \quad S(g^{-1}) = g, \quad S(h) = -hg^{-1}.
\end{array}
\]

Next we define a {\it $q$-number} 
$[\ell]_q  ~:=~ \frac{q^\ell - q^{-\ell}}{q - q^{-1}} $ for any integer $\ell$.
Now for any element $p = \sum_{i,j \geq 0} \lambda_{ij} x^iy^j$ in $\mathbb{C}_q[x,y]$, the rule below gives us an action of  $H_q$ on $\mathbb{C}_q[x,y]$:
$$g(p) = \sum_{i,j \geq 0} \lambda_{ij} q^{i-j} x^iy^j, \quad g^{-1}(p) = \sum_{i,j \geq 0} \lambda_{ij} q^{j-i} x^iy^j, 
\quad h(p) =  \sum_{i,j \geq 0} \lambda_{ij} [j]_q \;x^{i+1}y^{j-1}.$$
To check this, it suffices to show that (i) the relations of $H_q$ act  on $\mathbb{C}_q[x,y]$ by zero, and that (ii) the relation space of $\mathbb{C}_q[x,y]$ is preserved under the rule above. We'll provide some details here and leave the rest as an exercise. We compute:
\[ 
\begin{array}{lrl}
\text{For (i),}    &  (gh - q^2hg)(p) &=~ g(\sum \lambda_{ij} [j]_q \; x^{i+1}y^{j-1}) - q^2 h(\sum \lambda_{ij} q^{i-j} x^iy^j)\\
 \smallskip
 &&=~ \sum \lambda_{ij} [j]_q \;q^{i-j+2} x^{i+1}y^{j-1} - q^2 \sum \lambda_{ij} [j]_q \;q^{i-j} x^{i+1}y^{j-1} ~= 0;\\

 \text{For (ii),}  &   \; h(yx - qxy) &=~ [1(y)\; h(x) + h(y)\; g(x)] - q h(xy) ~=~ (x)(qx) - q(x^2) ~=0. 
 \end{array}
\]

Now the ``limit" of $H_q$ as $q \to 1$ is $H = \mathbb{C}[x] \otimes \mathbb{C} \mathbb{Z}$, the {\it tensor product of Hopf algebras},  for $\mathbb{Z} = \langle g \rangle$ (see, e.g., \cite[Exercise~2.1.19]{Radford}); $H$ is both commutative and cocommutative. Also, $H_q = H$ as $\mathbb{C}$-vector spaces. Moreover, as $q \to 1$, the generators $g$ and $g^{-1}$ (resp., $h$) of $H$ act on $\mathbb{C}_q[x,y]$ as the identity (resp., by $\frac{\partial}{\partial_y}$).

\end{example}


\section{Research directions in Noncommutative Algebra}
\label{sec:research}

We highlight a couple of directions for research in Noncommutative Algebra in this section, building on the discussions of Sections~1-4. The material below could also serve as a topic for an undergraduate or Master's thesis project, or as a reading course topic. Finding a friendly faculty (or advanced graduate student) mentor to help with these pursuits is a good place to start...


\subsection{On Symmetries} 

Continuing the discussions of Section~2 and~4, we propose the following avenue for research: Study of the symmetries of (algebraic structures that generalize) Hamilton's quaternions $\mathbb{H}$ [Problem~1]. One such generalization is given below.

\begin{definition} \label{def:quat} \cite[Section~5.4]{Cohn}
Fix a field $\Bbbk$ with char $\Bbbk \neq 2$, with nonzero scalars $a,b \in \Bbbk$. Then a {\it quaternion algebra} $Q(a,b)_\Bbbk$ is a $\Bbbk$-algebra that has an underlying 4-dimensional $\Bbbk$-vector space with basis $\{1, i ,j, k\}$, subject to multiplication rules
$$i^2 = a, \quad j^2 = b, \quad ij = -ji = k.$$
Note that $k^2 = ijk = -ab$, for instance. 
\end{definition}

Sometimes $Q(a,b)_\Bbbk$ is denoted by $(a,b)_\Bbbk$, by  $(a,b; \Bbbk)$, or even by $(a,b)$ if $\Bbbk$ is understood. The structure above extends the construction of Hamilton's quaternions [Definition~\ref{def:H}], namely $\mathbb{H} = Q(-1, -1)_\mathbb{R}$. Moreover, {\it split-quaternions}, $Q(-1, +1)_\mathbb{R}$, also appear frequently in the literature.

\smallskip
Fun fact: A quaternion algebra is either a 4-dimensional $\Bbbk$-division algebra [Definition~\ref{def:div}], or is isomorphic to the matrix algebra $M_2(\Bbbk)$! (The latter is called the {\it split} case.) Also, these cases are characterized by the {\it norm} of elements $Q(a,b)_\Bbbk$:  
$$N(a_0 + a_1 i +a_2 j +a_3 k):=a_0^2 - a a_1^2 - ba_2^2 + ab a_3^2, \;\; \text{ for }\;  a_0, a_1, a_2, a_3 \in \Bbbk.$$
Namely, if $\Bbbk$ has characteristic not equal to 2, then $Q(a,b)_\Bbbk$ is a division algebra precisely when $N(a_0 + a_1 i +a_2 j +a_3 k) = 0$ only for $(a_0, a_1, a_2, a_3) = (0, 0, 0, 0)$ \cite[Proposition~5.4.3]{CE}. For instance, $\mathbb{H}=Q(-1, -1)_\mathbb{R}$ is a $\mathbb{R}$-division algebra since
$$N(a_0 + a_1 i +a_2 j +a_3 k)=a_0^2 + a_1^2 + a_2^2 + a_3^2$$ for $a_0, a_1, a_2, a_3 \in \mathbb{R}$, and
is 0 if and only if $(a_0, a_1, a_2, a_3) = (0, 0, 0, 0)$. 

\smallskip
Quaternion algebras (in the generality of Definition~\ref{def:quat} above) have appeared primarily in number theory \cite{Vigneras} \cite[Chapter~5]{Miyake} and in the study of quadratic forms \cite[Chapter~III]{Lam:qf}. They have also been used in hyperbolic geometry \cite{Macfarlane} \cite[Chapter~2]{MR}, and in various parts of physics and engineering; see, e.g., \cite{Baylis} and \cite{Onsager}. For more details about their applications and structure, see \cite{Conrad} and the references within.

\smallskip Recall from Section~4 that there are several frameworks for studying symmetries of a $\Bbbk$-algebra, including group actions \textsc{[G-act]}, group gradings \textsc{[G-grd]}, and  Hopf algebra actions \textsc{[H-act]}. Also, the latter symmetries are considered to be {\it quantum symmetries} if $H$ is non(co)commutative, as  discussed by Figure~14. 

\begin{problem}
Study the (quantum) symmetries of quaternion algebras. Namely, pick a setting \textsc{[G-act]}, \textsc{[G-grd]}, \textsc{[H-act]}, a collection of structures ($G$ or $H$) in this class, and classify all such symmetries of $G$ or $H$ on $Q(a,b)_\Bbbk$. 
\end{problem}

Even if this problem is not addressed in full generality, a collection of examples would be quite useful for the literature.  For instance, a group grading of $Q(-1,-1)_\mathbb{R} = \mathbb{H}$ was used in recent work of Cuadra and Etingof as a counterexample to show that their main result on {\it faithful} group gradings on division algebras fails when the ground field is not algebraically closed \cite[Theorem~3.1, Example~3.4]{CE}. 

\smallskip

There are also other works that partially address Problem~1, such as on group gradings \cite{
CM, MG-O1, MG-O2} and Hopf algebra (co)actions \cite{DT, VO-Z}. These papers also contain work on (quantum) symmetries of some generalizations of quaternion algebras; \textbf{Problem~1  can also be posed for these generalizations of} $Q(a,b)_\Bbbk$ as well.

\smallskip

Moreover, \textbf{a second part of Problem~1} could include the study of two algebraic structures formed by the symmetries constructed above, namely,  the {\it subalgebra of (co)invariants}, and the {\it smash product algebra} (or,  {\it skew group algebra} if \textsc{[G-act]} is used). See \cite{Montgomery} for the definitions, examples, and a discussion of various uses of these algebraic structures. Overall, after one gets comfortable with the terminology, such problems are computational in nature ... and  fun to do!


\subsection{On Representations}  \label{sec:researchrep} 

In this section, $\Bbbk$ is a field of characteristic zero.

\smallskip
Towards a research direction in representation theory (continuing the discussion in Section~3) it is natural to think further about the representations of the first Weyl algebra $A_1(\Bbbk)$. Since there are no finite-dimensional representations of $A_1(\Bbbk)$ [Proposition~\ref{prop:A1-inf}], what are its infinite-dimensional representations?
To get one for example, identify $A_1(\Bbbk)$ as a ring of differential operators on $\Bbbk[x]$ where the generators $x$ and $y$ act as multiplication by $x$ and by $\frac{d}{dx}$, respectively. So, by fixing a basis $\{1, x, x^2, x^3, \dots\}$ of $\Bbbk[x]$, we get the (matrix form of) the infinite-dimensional representation in ~\eqref{eq:A1k-inf}.  Producing explicit infinite-dimensional representations of $A_1(\Bbbk)$ is tough in general. But there are many works on the {\it abstract} representation theory of $A_1(\Bbbk)$ and of other rings of differential operators, and we recommend the student-friendly text of S.C. Coutinho on {\it algebraic $D$-modules} \cite{Coutinho} for more information. 

\smallskip
Now for a concrete research problem to pursue, we suggest working with deformations of Weyl algebras instead, particularly those that admit finite-dimensional representations (as this is more feasible computationally). One could:

\begin{problem} \label{prob:qwa}
Examine the (explicit) representation theory of {\it quantum Weyl algebras} ({\it at roots of unity}) [Definition~\ref{def:qwa}].
\end{problem}

Before we discuss quantum Weyl algebras, we introduce some terminology  that will be of use later in order to make the problem above more precise. The text \cite{Etingofetal} (which, again, is student-friendly) is a nice reference for more details.

\begin{definition} Take a $\Bbbk$-algebra $A$ with a representation $$\phi: A \to \text{Mat}_n(\Bbbk) \; (\cong \text{End}(V)) \quad \text{ for }  \; V = \Bbbk^{\oplus n}.$$ 
\begin{enumerate}
\item We say that $\phi$ is {\it decomposable} if we can decompose $V$ as $W_1 \oplus W_2$ with $W_1, W_2 \neq 0$ so that $\phi|_{W_k}: A \to \text{End}(W_k)$ are representations of $A$ for $k =1,2$. 
Otherwise, we say that $\phi$ is {\it indecomposable}.
\smallskip
\item The representation $\phi$ is {\it reducible} if there exists a proper subspace $W$ of $V$ so that $\phi|_{W}: A \to \text{End}(W)$ is a representation of $A$; here, $\phi|_W$ is called a {\it (proper) subrepresentation} of $\phi$. If $\phi$ does not have any proper subrepresentations, then  $\phi$ is {\it irreducible}; the corresponding $A$-module $V$ is said to be {\it simple}  (cf. Figure~12).

\smallskip
\item Take another representation $\phi': A \to \text{End}(V')$ of $A$. We say that $\phi'$ is {\it equivalent} (or {\it isomorphic}) to $\phi$ if $\dim V = \dim V'$ and there exists an invertible $\Bbbk$-linear map $\rho: V \to V'$ so that $\rho(\phi_a(v)) = \phi'_a(\rho(v))$ for all $a \in A$ and $v \in V$.
\end{enumerate}
\end{definition}

Irreducible representations are indecomposable; the converse doesn't always~hold. 

\smallskip 
To understand the notions above in terms of matrix solutions of equations (cf. Figure~13), take a {\it finitely presented} $\Bbbk$-algebra $A$, that is, $A$ has finitely many noncommuting variables $x_i$ as generators, and finitely many words $f_j(\underline{x})$ in $x_i$ as relations:
$$ A = \frac{\Bbbk \langle x_1, \dots, x_t \rangle}{\left(f_1(\underline{x}), \dots, f_r(\underline{x})\right)}.$$
Let us also fix an $n$-dimensional representation of $A$, given by
$$\phi: A \to \text{Mat}_n(\Bbbk), \quad x_i \mapsto X_i \; \; \text{ for } i = 1, \dots, t.$$

\begin{definition} \label{def:rep-prop} Retain the notation above. Suppose that we have a matrix solution $\underline{X} = (X_1, \dots, X_t)$ to the system of equations $f_1(\underline{x}) =  \dots = f_r(\underline{x}) = 0$.
\begin{enumerate}
\item If each matrix $X_i$ can be written as a direct sum of matrices $X_{i,1} \oplus X_{i,2}$, where 
\begin{itemize} 
\item  $X_{i,k} \in \text{Mat}_{n_k}(\Bbbk)$ with $k=1,2$ for some positive integers $n_1$ and $n_2$, and
\smallskip

\item  $\underline{X_k} = (X_{1,k}, \dots, X_{t,k})$ is a solution to  $f_1(\underline{x}) =  \dots = f_r(\underline{x}) = 0$ for $k=1,2$,
\end{itemize}
then the matrix solution $\underline{X}$ is {\it decomposable}. Otherwise, $\underline{X}$ is {\it indecomposable}.
\smallskip
\item For $\text{Mat}_n(\Bbbk)$ identified as $\text{End}(V)$ with $V = \Bbbk^{\oplus n}$, suppose that there exists a proper subspace $W$ of $V$ that is stable under the action of each $X_i$. Then  we say that $\underline{X}$ is {\it reducible}. Otherwise, $\underline{X}$ is {\it irreducible}.
\smallskip
\item We say that another matrix solution $\underline{X'} \in \text{Mat}_{n'}(\Bbbk)^{\times t}$ to the system of equations $f_1(\underline{x}) =  \dots = f_r(\underline{x}) = 0$ is {\it equivalent} (or {\it isomorphic}) to $\underline{X}$ if $n = n'$ and 
 there exists an invertible matrix $P \in \text{GL}_{n}(\Bbbk)$ so that $P\;X_i\;P^{-1} = X_i$ for all $i$.
\end{enumerate}
\end{definition}

So two representations of $A$ (or, two matrix solutions of $\{f_j(\underline{x}) = 0\}_{j=1}^r$) are equivalent precisely when they are the same up to change of basis of $V = \bigoplus_{i=1}^t x_i$. 
Therefore Problem~\ref{prob:qwa} can be refined as follows.

\medskip

\noindent {\bf Precise version of Problem~\ref{prob:qwa}.} Classify the explicit irreducible representations of the quantum Weyl algebras [Definition~\ref{def:qwa}], up to equivalence.
\medskip

Let's define the quantum Weyl algebras now. One way of getting these algebras is by deforming the $m$-th Weyl algebras $A_m(\Bbbk)$ from \eqref{eq:Amk} via the symmetry discussed below.  (The reader may wish to skip to Definition~\ref{def:qwa} for the outcome of this chat.)

\begin{definition} Fix a $\Bbbk$-vector space $V$. 
\begin{enumerate}
\item A $\Bbbk$-linear transformation $c: V \otimes V \to V \otimes V$ is a  {\it braiding} if it satisfies the braid relation,
$(c \otimes \text{id}_V) \circ (\text{id}_V \otimes c) \circ (c \otimes \text{id}_V) 
=  (\text{id}_V \otimes c) \circ (c \otimes \text{id}_V)\circ  (\text{id}_V \otimes c) $
as maps $V^{\otimes 3} \to V^{\otimes 3}$.

\smallskip 

\item  A braiding $\mathcal{H}: V \otimes V \to V \otimes V$ is a {\it Hecke symmetry} if it satisfies the Hecke condition, 
$(\mathcal{H} - q\;\text{id}_{V \otimes V}) \circ (\mathcal{H} + q^{-1}\;\text{id}_{V \otimes V})~=~0$
as maps $V \otimes V \to V \otimes V$, for some nonzero $q \in \Bbbk$.
\end{enumerate}
\end{definition}

Given a Hecke symmetry $\mathcal{H} \in \text{End}(V \otimes V)$ one can form the {\it $\mathcal{H}$-symmetric algebra} 
$S_{\mathcal{H},q}(V) = T(V)/\left(\textnormal{Image}(\mathcal{H} - q\;\textnormal{id}_{V \otimes V})\right).$
For example, when $\mathcal{H} = {\sf flip}$ (sending $x_i \otimes x_j$ to $x_j \otimes x_i$) and $q=1$ we get that $S_{{\sf flip},1}(V)$ is the symmetric algebra $S(V)$ on $V$; this is isomorphic to the polynomial ring $\Bbbk[x_1, \dots, x_m]$ for $V = \bigoplus_{i=1}^m \Bbbk x_i$.

\smallskip
Summarizing the discussion in \cite{GZ}, we now build a $q$-version of a Weyl algebra using a Hecke symmetry $\mathcal{H}$ as follows. Consider the dual vector space $V^*$ and the induced $\Bbbk$-linear map $\mathcal{H}^* \in \text{End}(V^* \otimes V^*)$. 
Then construct the algebra $A_{\mathcal{H},q}(V \oplus V^*)$ on $V \oplus V^*$, which is the tensor algebra $T(V \oplus V^*)$ subject to the relations: 
$\textnormal{Image}(\mathcal{H} - q\;\textnormal{id}_{V \otimes V})$, and $\textnormal{Image}(\mathcal{H}^* - q^{-1}\;\textnormal{id}_{V^* \otimes V^*})$, and certain relations entertwining generators from $V$ with those from $V^*$ by using $\mathcal{H}$. The resulting algebra $A_{\mathcal{H},q}(V \oplus V^*)$ is called the {\it quantum Weyl algebra associated to $\mathcal{H}$}. 

\smallskip
For simplicity, we provide the presentation of $A_{\mathcal{H},q}(V \oplus V^*)$ for the standard 1-parameter Hecke symmetry  given on \cite[page~442]{JingZhang} (provided in the form of an R-{\it matrix}). Here,
$V = \bigoplus_{i=1}^m \Bbbk x_i$ and $V^* = \bigoplus_{i=1}^m \Bbbk y_i$ with $y_i := x_i^*$ (linear dual of~$x_i$).

\begin{definition} \label{def:qwa} \cite[page~442]{JingZhang} \cite[Definition~1.4]{GZ} Take $m \geq 2$. The {\it 1-parameter quantum Weyl algebra} is an associative $\Bbbk$-algebra $A_m^q(\Bbbk)$ with noncommuting generators $x_1, \dots, x_m,$ $y_1, \dots, y_m$ subject to relations
\[
\begin{array}{rll}
x_ix_j &= qx_jx_i, \quad y_iy_j = q^{-1}y_jy_i, &\forall i<j\\
y_ix_j &= qx_jy_i, &\forall i \neq j\\
y_ix_i &= 1+ q^2x_i y_i + (q^2-1) \sum_{j>i}x_jy_j,\quad &  \forall i.
\end{array}
\]
By convention, we define $A_1^q(\Bbbk)$ to be $\Bbbk\langle x,y \rangle/(yx - qxy -1)$. If $q$ is a root of unity then we refer to these algebras as quantum Weyl algebras {\it at a root of unity}.
\end{definition}

Notice that  one gets the Weyl algebras $A_1(\Bbbk)$ [Definition~\ref{def:A1k}] and $A_m(\Bbbk)$ [Equation~\eqref{eq:Amk}] by taking the ``limit" of $A_1^q(\Bbbk)$ and $A_m^q(\Bbbk)$ as $q \to 1$, respectively.

\smallskip

Fun fact: If $q$ is a root of unity, say of order $\ell$, then all irreducible representations of a quantum Weyl algebra $A_{\mathcal{H},q}(V \oplus V^*)$ are finite-dimensional! Moreover in this case, the dimension of an irreducible representation is $A_{\mathcal{H},q}(V \oplus V^*)$ is bounded above by some positive integer $N(\ell)$ depending on $\ell$, and this bound is met most of the time. This is part of a general phenomenon for {\it quantum $\Bbbk$-algebras} with scalar parameters-- they have infinite-dimensional irreducible representations in the generic case, and in the root of unity case all of their irreducible representations are finite-dimensional. Further, in the root of unity case, most irreducible representations of a quantum algebra $A$ have dimension equal to the {\it polynomial identity (PI) degree} of $A$ (See, for instance, the informative text of Brown-Goodearl \cite{BG}). For example, the PI degree of $A_1^q(\Bbbk)$ is equal to $\ell$ when $q$ is a root of unity of order~$\ell$.

\smallskip

This leads us to discussion of \textbf{a partial answer to  Problem~\ref{prob:qwa}}. Indeed, one was achieved for $A_1^q(\Bbbk)$, for $q$ a root of unity of order $\ell$, in two undergraduate research projects directed by E. Letzter  \cite{BLTPS} and by L. Wang \cite{HW}. The explicit irreducible matrix solutions $(X, Y)$ to the equation $YX - qXY = 1$ were computed in these works (up to equivalence), the majority of which are $\ell$-by-$\ell$ matrices. 

\smallskip

Naturally,  the \textbf{next case for Problem~\ref{prob:qwa}} is the representation theory of quantum Weyl algebras $A_{\mathcal{H},q}(V \oplus V^*)$, where $\dim_\Bbbk V = 2$ and $q$ is a root of unity; this should build on the partial answer  above. There are a few routes one could take, such as examining $A_m^q(\Bbbk)$ for $m \geq 2$, or more generally, addressing Problem~2 for  {\it multi-parameter quantum Weyl algebras} as in \cite[Example~2.1]{GZ} \cite[Definition~1.2.6]{BG}. 

\smallskip

Why care? One reason is that quantum Weyl algebras have appeared in numerous works in mathematics and physics, including Deformation Theory \cite{GG, GZ, JingZhang, Jordan}, Knot Theory \cite{FT}, Category Theory \cite{Laugwitz}, Quantum mechanics and Hypergeometric Functions \cite{Spir} to name a few. Therefore, any (partial) resolution to Problem~2 would be a welcomed addition to the literature. So let's have a go at this. :)

%
%
%
%
%


\begin{acknowledgement}
C. Walton is partially supported  by the US National Science Foundation with grants \#DMS-1663775 and 1903192, and with a research fellowship from the Alfred P. Sloan foundation. The author thanks the anonymous referees and Gene Abrams for their valuable feedback.
\end{acknowledgement}

\noindent \small{\bf Photo and Figure credits} \;  Figs. 2, 5-7, 9-10, 12-13: Author. (** = from unsplash.com)\\
\begin{tabular}{ll}
\; Fig. 1: Tammie Allen, $@$tammeallen**. \quad\quad& Fig. 8: Wikipedia, user: JP.  \\
\; Fig. 3: GazzaPax (flickr.com). & Fig. 11: Billy Huynh, $@$billy\_huy**.\\
\; Fig. 4:  Karolina Szczur, $@$thefoxis**. & Fig 14. Dan Gold, @danielcgold**.\\
\end{tabular}

\pagebreak
\section*{EDGE and Me.} 
I was first introduced to the EDGE program the summer before my first year of graduate school, as some of my mentors suggested that participating in the program would be a great way to build a support network before beginning my studies. At the time, I decided to go with other options to get prepared for graduate school but I kept EDGE in mind for future involvement. Fortunately, during the first year of my post-doctoral position I was granted the opportunity to teach for EDGE. It was fantastic to work with other women, including many women of color, who were about to embark on their graduate school journeys. I was also honored to have the opportunity to work with other faculty who `walk the walk' in efforts to increase diversity, inclusion, and equity of researchers and educators in the mathematical sciences. Fortunately I was able to participate as an EDGE instructor for the remainder of my post-doctoral years, and the sisterhood that the EDGE program has provided helped facilitate my path up the academia ladder. 
\smallskip

Being able to see myself in others --in students coming after me, in faculty clearing the path for me, and in peers with me along the way-- is a crucial part of my finding happiness and a sense of belonging in this job. This is especially true for women (of color) in general because there are many extra obstacles, major and minor, that we have to confront in order to succeed. 
For instance, here's an annoying one: During my literature search for this article I came quotes across like,
\begin{itemize}
\item[*] ``[...] developed by the Leningrad School (Ludwig Faddeev, Leon Takhtajan, Evgeny Sklyanin, Nicolai Reshetikhin and Vladimir Korepin) and related work by the Japanese School" [with no Japanese mathematicians listed], and
\smallskip

\item[*] ``My interview was finished when a dolled-up woman with butterfly-shaped glasses appeared, who informed me that I should rise because a lady has entered the room" [when this woman's appearance had nothing to do with the topic of the article and no other women were mentioned].
\end{itemize}
It certainly took extra energy to decide how to address these exclusionary passages (usually being `Don't be distracted by this mess') and keep moving. Those little, extra efforts add up over time.  
\smallskip

But what has kept me going?  Loving mathematics, and having a network of people like those in the EDGE program who love mathematics as well and view the field through a similar lens. It is my humble wish to help clear the path so that EDGE program participants and other marginalized folks can see themselves, not through the muddied lens of others' biases or prejudices, but with the proper view of using one's talents (mathematics) to find happiness, community, and fulfillment with this work. So when I receive email threads like, 
\begin{itemize}
\item[*] ``Please join me in congratulating two EDGErs on successfully completing their PhDs: Shanise Walker (E'12), who received her PhD in mathematics from Iowa State University in May and Jessica De Silva (E'13), who received her PhD from the University of Nebraska-Lincoln in June!
Congrats Dr. Walker! Congrats Dr. De Silva!" --T. Diercks (EDGE program~admin.),\\ followed by

\smallskip

\item[*] ``Wonderful news!!!!! Warmest congratulations, Shanise and Jessica, and all the best moving forward in your careers. Hugs, Rhonda" -- Rhonda Hughes (co-founder of the EDGE program);

\smallskip

\item[*] ``Congratulations, ladies!" -- Chassidy Bozeman (2012 EDGE program participant);

\smallskip

\item[*] ``BIG CONGRATULATIONS AND LOTS OF JOYFUL NOISE!!!!   Awesome.
I think we just passed 90 EDGE PhDs !! Bursting with admiration and pride...  
Ami" 
\\-- Ami Radunskaya (EDGE program co-director); 

%
\end{itemize}
it gives me extra energy to proceed, to not be distracted, and to keep moving. And those meaningful, inspirational boosts add up over time!


\end{document}